\theoremstyle{plain}
\newtheorem{theorem}{Theorem}[section]
\newtheorem{lemma}[theorem]{Lemma}
\newtheorem{corollary}[theorem]{Corollary}
\newtheorem*{claim*}{Claim}
\theoremstyle{definition}
\newtheorem{remark}[theorem]{Remark}
\newtheorem{definition}[theorem]{Definition}
\newtheorem{example}[theorem]{Example}
\newcommand{\parens}[1]{\left(#1\right)}
\newcommand{\sqbracks}[1]{\left[#1\right]}
\newcommand{\norm}[1]{\left\lVert#1\right\rVert}
\DeclareMathOperator{\rad}{rad}
\DeclareMathOperator{\Disc}{Disc}
\DeclareMathOperator{\SD}{SD}
\newif\ifshowold{} 
\begin{document}

\title{The Ring Learning With Errors Problem: Spectral Distortion}

\author [L. Babinkostova] {L.\ Babinkostova $^1$}
\address{$^1$ Boise State University}

\author[A. Chin]{A.\ Chin $^2$}
\address{$^2$ University of California, Berkeley}

\author[A. Kirtland ]{A.\ Kirtland $^3$ }
\address{ $^3$ Washington University in St. Louis}

\author[V. Nazarchuk]{V. \ Nazarchuk $^4$}
\address{$^4$ Yale University}

\author[E. Plotnick]{E. \ Plotnick $^5$}
\address{$^5$ Harvard University}

\thanks{Supported by the National Science Foundation under the grant number DMS-1659872.}

\thanks{$^{\S}$ Corresponding Author: liljanababinkostova@boisestate.edu}
\subjclass[2010]{14H52, 14K22, 11Y01, 11N25, 11G07, 11G20, 11B99} 
\keywords{Learning with Errors, Spectral Distortion, Cyclotomic Polynomials}

\maketitle

\begin{abstract}

We answer a question posed by Y. Elias and others~\cite{RLWE for NT} about possible spectral distortions of algebraic numbers.  We provide a closed form for the spectral distortion of certain classes of cyclotomic polynomials. Moreover, we present a bound on the spectral distortion of cyclotomic polynomials.

\end{abstract}

\section{Introduction}



A large fraction of lattice-based cryptographic constructions are built upon on Learning With Errors (LWE) problem  or its variants learning with errors. The Learning With Errors (LWE) problem introduced by O. Regev \cite{LP}, relates to solving a ``noisy" linear system modulo a known integer. The ``algebraically structured" variants, called RLWE \cite{MR}, PLWE \cite{LP}, Module-LWE \cite {A2}. As other cryptographic problems, LWE is an average-case problem which means the input instances are chosen at random from a prescribed probability distribution. 

Since its introduction, the RLWE problem \cite{LP} has already been used as a building block for many cryptographic applications. It has since been used as a hardness assumption in the constructions of efficient signature schemes \cite{W}, fully-homomorphic encryption schemes \cite{BV}, pseudo-random functions \cite{B}, protocols for secure multi-party computation \cite{DPA}, and also gives an explanation for the hardness of the NTRU cryptosystem \cite{HPS}. 

The RLWE and PLWE problems are formulated as either ``search" or
``decision" problems. Let $f(x) \in \mathbb{Z}[x]$ to be monic and irreducible of degree $n$, $P=\mathbb{Z}[x]/f(x)$, and $P_q = P/qP \cong F_q[x]/f(x)$ where $q$ is a prime.

{\bf Search PLWE Problem.} Let $s(x)\in P_q$ be a secret. The
search PLWE problem, is to discover $s(x)$ given access to arbitrarily many independent samples of the form $(a_i(x), b_i(x) = a_i(x)s(x)+e_i(x)) \in  P_q\times P_q$,
where for each $i$, $e_i(x)$ is chosen from a discretized Gaussian of parameter $\sigma$, and $a_i(x)$ is uniformly random.
The polynomial $s(x)$ is the secret and the polynomials $e_i(x)$ are the errors.

{\bf Decision PLWE Problem.} Let $s(x) \in P_q$ be a secret. The
decision PLWE problem is to distinguish, with non-negligible advantage, between the same number of independent samples in two distributions on $P_q\times P_q$. The first consists of samples of the form $(a(x), b(x) = a(x)s(x)+e(x))$
where $e(x)$ is chosen from a discretized Gaussian distribution of parameter $\sigma$, and $a(x)$ is uniformly random. The second consists of uniformly random and independent samples from $P_q \times P_q$.

In \cite{CLS}, an attack on PLWE was presented in rings $P_q = F_q[x]/(f(x))$, where $f(1)\equiv 0 \mod q$.

There are also two standard PLWE problems, quoted here from \cite {RLWE for NT}. Let $\mathbb{K}$ be number field of degree $n$ with ring of integers $R$. Let $R^v$ denote the dual of $R$,
$R^v = \{\alpha \in K : Tr(\alpha x) \in \mathbb{Z} \mbox{ for all } x \in R\}$. The standard RLWE problems \cite{on ideal lattices} for a canonical discretized Gaussian are defined as follows.

{\bf Search RLWE Problem}. Let $s \in R^v_q$ be a secret.
The search RLWE problem is to discover s given access to arbitrarily many independent samples of the form $(a, b = as + e)$ where $e$ is chosen from the canonical discretized Gaussian and $a$ is uniformly random.

{\bf Decision RLWE Problem}. Let $s \in R_q$ be a secret.
The decision RLWE problem is to distinguish with non-negligible advantage between the same number of independent samples in two distributions on $R_q\times R^v_q$ . The first consists of samples of the form $(a, b = as+e)$ where $e$ is chosen from the canonical discretized Gaussian and a is uniformly random, and the second consists of uniformly random and independent samples from
$R_q × R^v_q$ .

In ~\cite{attacks on search}, ~\cite{on ideal lattices} the authors give sufficient conditions on the ring so that the ``search-to-decision" reduction for RLWE holds, and also that RLWE instances can be translated into PLWE instances, so that the RLWE decision problem can be reduced to the PLWE decision problem.

\begin{theorem}[Search-to-Decision Reduction for RLWE,~\cite{attacks on search},~\cite{on ideal lattices}]
There exists a randomized, polynomial time reduction from Search-RLWE to Decision-RLWE\@.
\end{theorem}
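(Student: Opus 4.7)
The plan is to follow the classical hybrid/CRT approach of Lyubashevsky, Peikert, and Regev. Write the reduction in three layers: (i) a worst-case search-to-worst-case-decision reduction for a single prime ideal, (ii) a reduction combining these across the prime ideals above $q$, and (iii) an amplification layer that turns a decision oracle with only non-negligible advantage into a reliable coordinate tester.

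First, assume $q$ is a rational prime that splits in $R$ as $qR = \prod_{i=1}^n \mathfrak{q}_i$, so that by CRT $R_q \cong \prod_i R/\mathfrak{q}_i \cong \prod_i \mathbb{F}_q$. Writing $s \in R_q^v$ coordinate-wise, it suffices to recover $s \bmod \mathfrak{q}_i$ for every $i$. Fixing $i$, I would design a transformation that, on input $(a,b=as+e)$ and a guess $g\in\mathbb{F}_q$, produces a new pair that is RLWE-distributed when $g \equiv s \pmod{\mathfrak{q}_i}$ and is uniform modulo $\mathfrak{q}_i$ (while remaining RLWE in the other coordinates) when $g \not\equiv s \pmod{\mathfrak{q}_i}$. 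The construction is the standard one: pick a uniformly random $h \in R_q$ that is zero modulo every $\mathfrak{q}_j$ with $j\neq i$, and replace $(a,b)$ by $(a+h,\,b+hg)$; the difference $h(g-s)$ is either zero modulo $\mathfrak{q}_i$ or uniformly random there. Running the decision oracle on the transformed samples for each of the $q$ candidates $g$ then identifies $s \bmod \mathfrak{q}_i$.

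Second, to move from one prime ideal to all of them I would exploit the Galois action: for a Galois number field, the automorphisms of $K$ permute the primes above $q$, and by the assumed invariance (or near-invariance) of the canonical discretized Gaussian under this action, an RLWE sample transported by $\tau \in \mathrm{Gal}(K/\mathbb{Q})$ remains an RLWE sample for the twisted secret. Thus an oracle that distinguishes in at least one coordinate can be rotated to distinguish in every coordinate, and the per-coordinate test above recovers $s \bmod \mathfrak{q}_i$ for all $i$; CRT then assembles $s$.

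Third, since the decision oracle succeeds only with non-negligible advantage $\epsilon$, I would plug it into a Chernoff-style majority vote over $\mathrm{poly}(n,1/\epsilon)$ fresh samples for each candidate $g$, combined with the standard randomized self-reducibility of RLWE (shifting $s$ by a known uniform element, re-randomizing $a$, and re-adding independent Gaussian noise absorbable by a mild noise-flooding lemma) so that each call is to an independent instance. The main obstacle I would expect is the noise-flooding / error-preservation step: one must argue that the transformed samples have a distribution statistically close to a genuine RLWE distribution (possibly with slightly larger Gaussian parameter), and that the Gaussian is preserved by the Galois twists used in step two. This is where the analytic content lives; the CRT bookkeeping and the amplification are comparatively routine once the distributional closeness is in hand.
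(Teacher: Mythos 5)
The paper does not prove this theorem; it is quoted, with attribution to Chen--Lauter--Stange and Lyubashevsky--Peikert--Regev, as background for the RLWE/PLWE framework, so there is no in-paper argument to compare against. That said, your sketch does reconstruct the essential architecture of the Lyubashevsky--Peikert--Regev reduction in the cited source: CRT factorization of $R_q$ over the primes above a fully split $q$, a per-ideal guess-and-rerandomize test, transport between prime ideals by the Galois action, and Chernoff-style amplification with random self-reducibility.

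Two points deserve more care before the sketch would become a proof. First, your hedge that the canonical discretized Gaussian is invariant ``or nearly invariant'' under the Galois action is precisely where the real content lies: the LPR argument requires \emph{exact} invariance under $\mathrm{Gal}(K/\mathbb{Q})$, which is what restricts the clean statement to cyclotomic (or at least Galois, automorphism-invariant-error) fields; this cannot be waved through as a perturbation. Second, the transform $(a,b)\mapsto(a+h,\,b+hg)$ produces, when the guess $g$ is wrong, a pair that is uniform in the single coordinate modulo $\mathfrak{q}_i$ and RLWE-distributed in the others, whereas a bare decision oracle only distinguishes all-RLWE from all-uniform. You therefore still need the intermediate hybrid-distribution argument to locate a coordinate $i$ at which the oracle's advantage concentrates, and only then run your test against that coordinate (and its Galois conjugates). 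With those two steps made explicit, your outline matches the cited source.
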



We investigate the spectral distortion that occurs in the RLWE to PLWE reduction (\emph{spectral distortion}), a question posed in~\cite{RLWE for NT}. Our results include a closed form for the spectral distortion of certain classes of polynomials, and bounds for spectral distortion and related values.

\section{Preliminaries} 

\subsection{Learning with Errors Distributions}
The \emph{RLWE distribution} is parameterized by $(K,s,q,σ)$, where $K$ is a number field, $s$ is some secret, $q$ prime, and $σ$ is the parameter for the error distribution.
\begin{definition}[RLWE Distribution,~\cite{RLWE for NT}] ~\vspace{1mm}\\
For some number field $K$, let ring $R=\mathcal{O}_K$ be its ring of integers. Suppose $q$ to be prime. Then, we define \[R_q≔ R/qR.\] Let $\mathcal{U}_{R_q}$ be the uniform distribution over $R_q$, and let $\mathcal{G}_{σ, R_q}$ be the discrete Gaussian distribution centered at 0 with variance $σ^2$ over $R_q$.
\noindent Let some $s\in R_q$ be the secret. Sample $a$ from the uniform distribution, $a\leftarrow \mathcal{U_{R_q}}$, and the error $e$ from the Gaussian distribution, $e\leftarrow \mathcal{G}_{σ, R_q}$.
\noindent Pairs of the form
\[(a, a⋅ s + e)\]
make up the \textit{RLWE distribution} $\mathcal{L}_{s, G_{σ}}$ over $R_q\times R_q$. For simplicity, we let $c = a⋅ s + e$, and refer to $(a,c)$ as our sample in the future.
\end{definition}

\noindent The \emph{PLWE distribution} is defined similarly; rather than the ring of integers of a number field, the distribution is defined over a polynomial ring. The PLWE distribution is parameterized by $(f, n, s, q, σ)$, where $f\inℤ[x]$ is a monic, irreducible polynomial of degree $n$, $s$ is some secret, $q$ prime, and $σ$ is the parameter of the error distribution.
\begin{definition}[PLWE Distribution,~\cite{RLWE for NT}]~\vspace{1mm}\\
Let $f\in ℤ[x]$ be monic, irreducible of degree $n$. Assume that $f$ splits over $ℤ_q≔ ℤ/qℤ$. Then, we define
\[P\coloneqq ℤ[x]/(f(x)), P_q\coloneqq P/qP.\]

\noindent Let $G_{σ, P}$ be a discretized Gaussian over $P$ spherical in the power basis of $P$ $(1,x,x^2,\ldots, x^{n-1})$. Let $\mathcal{U}_{P_q}$ be the uniform distribution over $P_q$, and let $\mathcal{G}_{σ, P_q}$ be the discrete Gaussian distribution centered at 0 with variance $σ^2$ over $P_q$.

\noindent Let some $s\in P_q$ be the secret. Sample $a$ from the uniform distribution, $a\leftarrow \mathcal{U_{P_q}}$, and the error $e$ from the Gaussian distribution, $e\leftarrow \mathcal{G}_{σ, P_q}$.

\noindent Pairs of the form
\[(a, a⋅ s + e)\]
make up the \textit{PLWE distribution} $\mathcal{L}_{s, G_{σ}}$ over $P_q\times P_q$. Similarly to RLWE, we let $c = a\cdot s + e$, and refer to the samples $(a,c)$.
\end{definition}




\subsection{Spectral Distortion}

In this section, we reference several terms commonly associated with the computation of spectral distortion. 

\begin{definition}
Let $f$ be a monic, irreducible polynomial over $ℤ$ of degree $n$, with some root $\alpha$, and all roots $α_i$. Let $M_f$ be the Vandermonde matrix ${(α_i^{j-1})}_{ij}$. The \emph{Minkowski embedding} of the number field $K=ℚ(α)$ is a function $M:K→ℝ^{r_1}⊗ℂ^{2r_2}$, where every component of $M$ is a field homomorphism, $r_1$ is the number of real roots of $f$, and $2r_2$ is the number of complex roots of $f$.\\
\end{definition}

Let $B$ be the unitary matrix
\[\begin{bmatrix}
I_{r_1\times r_1} & 0 & 0 \\
0 & \frac{\sqrt{2}}{2}I_{r_2×r_2} & \frac{i\sqrt{2}}{2}I_{r_2×r_2} \\
0 & \frac{\sqrt{2}}{2}I_{r_2×r_2} & \frac{-i\sqrt{2}}{2}I_{r_2×r_2} \\
\end{bmatrix}\]
The columns of $B$ give an orthonormal basis under which the Minkowski space is isomorphic to $\mathbb{R}^n$ as an inner product space \cite{provably weak revisited}. Note that the $\sqrt{2}$ factor ensures this $B$ is unitary.
Because $B$ is unitary, $B^{-1}=B^†$.

\begin{remark}

We note here that $B^†M_f=B^{-1}M_f$ is the transpose of the real matrix
\[\begin{bmatrix}
σ_1(1) & ⋯ & σ_{r_1}(1) & \sqrt{2}ℜ(σ_{r_1+1}(1)) & ⋯ & \sqrt{2}ℜ(σ_{r_1+r_2}(1)) & \sqrt{2}ℑ(σ_{r_1+1}(1)) & ⋯ & \sqrt{2}ℑ(σ_{r_1+r_2}(1)) \\
σ_1(α) & ⋯ & σ_{r_1}(α) & \sqrt{2}ℜ(σ_{r_1+1}(α)) & ⋯ & \sqrt{2}ℜ(σ_{r_1+r_2}(α)) & \sqrt{2}ℑ(σ_{r_1+1}(α)) & ⋯ & \sqrt{2}ℑ(σ_{r_1+r_2}(α)) \\
⋮ & & ⋮ &  ⋮& & ⋮  & ⋮ & & ⋮\\
σ_1(α) & ⋯ & σ_{r_1}(α) & \sqrt{2}ℜ(σ_{r_1+1}(α)) & ⋯ & \sqrt{2}ℜ(σ_{r_1+r_2}(α)) &  \sqrt{2}ℑ(σ_{r_1+1}(α)) & ⋯ & \sqrt{2}ℑ(σ_{r_1+r_2}(α)) \\
\end{bmatrix}\]

We have
\[{(B^†M_f)}^†(B^†M_f)=M_f^†BB^†M_f=M_f^†M_f\]
Therefore, we may implicitly compute using $B^†M$ instead of $M$. We will use this fact in several of the proofs in this paper.

Because $B^†M_f$ is real, ${(B^†M_f)}^†(B^†M_f)$ is real, and ${(B^†M_f)}^†(B^†M_f)$ is conjugate transpose symmetric, so $M^†M$ is a real, symmetric matrix.

\end{remark}

\begin{definition}
The \emph{spectral norm} $\norm{M}_2$ is the measure of the distortion between RLWE and PLWE for a specific polynomial $f$, given by the largest singular value of $M_f^†M_f$ \cite{provably weak}. 
The normalized spectral norm, or \emph{spectral distortion}, provides another measure of distortion that is a convenient quantity in reductions from PLWE to RLWE\@.
The spectral distortion is defined by 
\[\SD(f) = \frac{\norm{M_f^{-1}}_2}{|\det M^{-1}|^{\frac{1}{n}}} = \frac{\frac{1}{σ_{\min}(M_f)}}{\frac{1}{|\det M_f|^{\frac{1}{n}}}} = \frac{|\det M_f|^{\frac{1}{n}}}{σ_{\min}(M_f)}\]
\end{definition}


\section{Cyclotomic Polynomials and Bounds on Spectral Distortion}

We first consider the case that $f$ is a cyclotomic polynomial, the current class of candidates for lattice-based homomorphic encryption with ideal lattices \cite{RLWE for NT}. In addition, cyclotomic polynomials tend to have a comparatively smaller spectral norm than general polynomials.
In this case, the $M^†M$ matrix has a convenient formula, from which its eigenvalues can be determined easily in some cases.

\begin{theorem}\label{1}
  Let $n=p_1^{k_1}⋯p_{ω(n)}^{k_{ω(n)}}$, for primes $p_i$ and $k_i\in\mathbb{N}$. Then, the $M_f^†M_f$ matrix is of the following form:
\[{(M_f^†M_f)}_{ij}=\begin{cases}
φ(n) & \text{if~}~ i=j \\
0 & \text{if~}~ \frac{n}{\rad\parens{n}}∤ i-j \\
{(-1)}^{ω(n)+ω(d)}\parens{\frac{n}{\rad(n)}}φ\parens{\rad\parens{d}} & \text{if~}~ \frac{n}{\rad(n)}\mid i-j
\end{cases}\]
where $d = \gcd\left(\frac{i-j}{n/\rad(n)}, n\right)$
\end{theorem}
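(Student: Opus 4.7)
The plan is to compute the entries of $M_f^\dagger M_f$ directly from the definition, recognize them as Ramanujan sums, and then apply Hölder's classical formula together with a careful bookkeeping of how the invariants $\text{rad}(d)$, $\omega(d)$, and $n/\text{rad}(n)$ interact.

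First, since $f = \Phi_n$ has for roots the primitive $n$-th roots of unity $\{\alpha_k\}$ and the Vandermonde is defined by $(M_f)_{ij} = \alpha_i^{j-1}$, a direct computation using $\overline{\zeta} = \zeta^{-1}$ for roots of unity gives
\[
(M_f^\dagger M_f)_{ij} \;=\; \sum_{k} \overline{\alpha_k^{i-1}}\, \alpha_k^{j-1} \;=\; \sum_{k} \alpha_k^{j-i} \;=\; c_n(j-i),
\]
where $c_n(m) = \sum_{\gcd(\ell,n)=1} e^{2\pi i \ell m/n}$ is the Ramanujan sum. Note that $c_n$ is even in its argument, so the choice of sign of $i-j$ is immaterial.

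Second, I would invoke Hölder's identity $c_n(m) = \mu(n/d_0)\,\phi(n)/\phi(n/d_0)$ with $d_0 = \gcd(n,m)$. The diagonal case $i=j$ gives $m=0$, $d_0=n$, hence $c_n(0) = \phi(n)$, which is the first case of the theorem. For the off-diagonal cases, write $R = n/\text{rad}(n)$. The Möbius factor $\mu(n/d_0)$ is nonzero iff $n/d_0$ is squarefree, equivalently $R \mid d_0$, equivalently $R \mid (i-j)$. This yields the vanishing in the second case.

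Third, when $R \mid (i-j)$, I would write $m = i-j = R m'$ so that $d_0 = R \cdot \gcd(\text{rad}(n), m')$. Setting $d = \gcd(m', n)$ as in the theorem, the key identity is
\[
\text{rad}(d) \;=\; \gcd(\text{rad}(n), m'),
\]
which holds because both equal the squarefree product of the primes $p \mid n$ with $p \mid m'$. It follows that $n/d_0 = \text{rad}(n)/\text{rad}(d)$ is a product of exactly those primes of $n$ that do \emph{not} divide $m'$, so
\[
\mu(n/d_0) = (-1)^{\omega(n) - \omega(d)} = (-1)^{\omega(n) + \omega(d)}.
\]
Finally, using $\phi(n) = R \prod_{p_i \mid n}(p_i - 1)$ and multiplicativity, the ratio $\phi(n)/\phi(\text{rad}(n)/\text{rad}(d))$ simplifies to $R \cdot \phi(\text{rad}(d))$, and combining these yields the third case exactly as stated.

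The main obstacle I anticipate is purely combinatorial bookkeeping in the third step: one must recognize the theorem's somewhat unusual parameter $d = \gcd\bigl((i-j)/R,\, n\bigr)$ as encoding precisely the squarefree kernel $\gcd(\text{rad}(n), m')$ that appears naturally in Hölder's formula, and then match the signs, the $R$-factor, and the $\phi(\text{rad}(d))$-factor to the three ingredients of the right-hand side. Once the identity $\text{rad}(d) = \gcd(\text{rad}(n), m')$ is in hand, the rest is a straightforward consequence of the multiplicativity of $\mu$ and $\phi$.
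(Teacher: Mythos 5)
Your proof is correct, and it takes a genuinely different route from the paper's. You immediately recognize $(M_f^\dagger M_f)_{ij}$ as the Ramanujan sum $c_n(j-i)$ and invoke H\"older's classical identity $c_n(m) = \mu(n/d_0)\,\phi(n)/\phi(n/d_0)$ with $d_0 = \gcd(n,m)$, then translate the parameters. The paper, by contrast, never names the Ramanujan sum: it writes the entry as $\sum_l \Re\,\zeta_n^{(i-j)c_l}$ over reduced residues $c_l$, expresses the complementary sum over non-coprime residues via inclusion--exclusion on the prime factors of $n$, and manually collapses the resulting alternating sum into $\pm(n/\rad n)\,\phi(\rad d)$ by a telescoping factorization into $\prod(q_i - 1)$. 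In effect, the paper re-derives H\"older's formula from scratch in the special case it needs, while you cite it and focus on the bookkeeping. Your version is shorter and more conceptual; the paper's is self-contained and does not presuppose familiarity with Ramanujan sums. Your identification of the two crucial translations --- that $\mu(n/d_0)\neq 0$ precisely when $n/\rad(n)\mid m$, and that $\rad(d) = \gcd(\rad(n), m')$ where $m' = (i-j)/(n/\rad n)$ and $d = \gcd(m',n)$ --- is exactly the content the paper obtains through its case analysis, so the two proofs establish the same facts in different orders. One stylistic remark: since the paper computes with the real matrix $B^\dagger M_f$ (and notes $(B^\dagger M_f)^\dagger(B^\dagger M_f) = M_f^\dagger M_f$), it carries factors of $\sqrt 2$ and splits sums into real and imaginary parts; your direct complex computation avoids this entirely and is cleaner.
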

\begin{proof}

Let $c_1,…,c_{φ(n)}$ be the integers coprime to $n$, up to $n$. Then, we label the roots of $f$, the primitive $n$-th roots of unity, as  $ζ_n^{c_1},…,ζ_n^{c_{φ(n)}}$.
By properties of $n$-th roots of unity, we know that $ζ_n^{c_l}$ and $ζ_n^{c_{\varphi(n)+1-l}}$ are complex conjugates.

Then, we note that the $j$-th row of $M^†$ looks like
\[\begin{bmatrix}
    \sqrt{2}~\Re\parens{ζ_n^{jc_1}} & ⋯ & \sqrt{2}~\Re\parens{ζ_n^{jc_{φ(n)/2}}} & \sqrt{2}~\Im\parens{ζ_n^{jc_1}} & ⋯  & \sqrt{2}~\Im\parens{ζ_n^{jc_{φ(n)/2}}}
  \end{bmatrix}
\]
where $\Re(ζ_n^{c_l})=\cos(2πc_l/n)$ and $ℑ(ζ_n^{c_l})=\sin(2πc_l/n)$.

\begin{equation*}
\begin{split}
  {(M_f^†M_f)}_{ij}
&=2∑_{l=1}^{φ(n)/2} \Big(\cos(2πic_l/n)\cos(2πjc_l/n)+\sin(2πic_l/n)\sin(2πjc_l/n)\Big) \\
&=2∑_{l=1}^{φ(n)/2} \cos(2π c_l(i-j)/n) 
=2∑_{l=1}^{φ(n)/2} ℜζ_{n}^{c_l(i-j)} \\
&=∑_{l=1}^{φ(n)/2} \parens{ℜζ_{n}^{(i-j)c_l}+ℜζ_{n}^{-(i-j)c_l}} =∑_{l=1}^{φ(n)} ℜζ_{n}^{(i-j)c_l}
\end{split}
\end{equation*}

Let $g_l$ iterate through the $n-φ(n)$ integers not coprime to $n$. If $i-j=0$, then we see that ${(M_f^†M_f)}_{ij}=φ(n)$.
If $i-j≠0$, then we have
\[∑_{l=1}^{φ(n)} ζ_{n}^{(i-j)c_l}+∑_{l=1}^{n-φ(n)} ζ_{n}^{(i-j)g_l} = ∑_{l=0}^{n-1}ζ_{n}^{(i-j)l} = 0\implies {(M_f^†M_f)}_{ij}=-\Re∑_{g_l}ζ_{n}^{(i-j)g_l}\]

The next part of the proof uses inclusion-exclusion on the prime factors of $n$ to count all roots with a nontrivial common factor to $n$ (or all roots not coprime to $n$).
Let $p_1,…,p_{ω(n)}$ be the prime factors of $n$ where  $ω(n)$ denotes the number of all distinct prime factors of $n$.
For the last term, there is just one possible set of $ω(n)$ unique prime factors.

\[
  \begin{split}
    -∑_{g_l}ζ_{n}^{(i-j)g_l}
    &=-∑_{k=1}^{ω(n)} ∑_{t=0}^{n/p_k-1} ζ_n^{(i-j)tp_k} 
    +∑_{k<l}^{ω(n)} ∑_{t=0}^{n/(p_kp_l)-1} ζ_{n}^{(i-j)tp_kp_l}
    +…+{(-1)}^{ω(n)} ∑_{t=0}^{n/(\rad(n))-1} ζ_n^{(i-j)t\rad(n)} \\
    &=∑_{k=1}^{ω(n)} {(-1)}^k ∑_{p_{l_1}<⋯ <p_{l_k}} ∑_{t=0}^{n/∏_s p_{l_s} -1} ζ_{n}^{(i-j)t∏_s p_{l_s}} \\
\end{split}
\]

We observe

\[∑_{t=0}^{n/∏_s p_{l_s} -1} ζ_{n}^{(i-j)t∏_s p_{l_s}}
  =∑_{t=0}^{n/∏_s p_{l_s} -1} ζ_{n/∏_s p_{l_s}}^{(i-j)t}
  = \begin{cases}\frac{n}{∏p_{l_s}} &  \frac{n}{∏_s p_{l_s}}\mid (i-j) \\ 0 &  \frac{n}{∏_s p_{l_s}}∤ (i-j)\end{cases}
  = \frac{n}{\rad(n)} \begin{cases}\frac{\rad n}{∏ p_{l_s}} &  \frac{n}{∏_s p_{l_s}}\mid (i-j) \\ 0 &  \frac{n}{∏_s p_{l_s}}∤ (i-j)\end{cases}
\]

Let $\Pi_r p_{l_r} = \frac{\rad n}{∏ p_{l_s}}$ be the complement set of $ω(n)-k$ primes where $\rad(n)$ denotes the product of all distinct prime factors of $n$. Then,

\[
  \begin{split}
    &=∑_{k=1}^{ω(n)} {(-1)}^k ∑_{p_{l_1}<⋯ <p_{l_k}} ∑_{t=0}^{n/∏_s p_{l_s} -1} ζ_{n}^{(i-j)t∏_s p_{l_s}} \\
    &=\frac{n}{\rad(n)} ∑_{k=1}^{ω(n)} {(-1)}^k ∑_{p_{l_1}<⋯ <p_{l_{ω(n)-k}}} \begin{cases}∏_r p_{l_r} & \text{if~}~ \frac{n∏_r p_{l_r}}{\rad n}\mid (i-j) \\ 0 & \text{if~}~ \frac{n∏_r p_{l_r}}{\rad n} ∤ (i-j)\end{cases} \\
  \end{split}
  \]

  We see that if $\frac{n}{\rad n}∤ i-j$, then $\frac{n∏_r p_{l_r}}{\rad n} ∤ (i-j)$, and the above summations are all zero.
If $\frac{n}{\rad n}\mid i-j$, then we can factor out $\frac{n}{\rad n}\mid i-j$ from our cases to get

\[
  {M^†M}_{ij}
  =\frac{n}{\rad(n)} ∑_{k=1}^{ω(n)} {(-1)}^k ∑_{p_{l_1}<⋯ <p_{l_{ω(n)-k}}} \begin{cases}∏_r p_{l_r} & \text{if~}~ ∏_r p_{l_r} \mid \frac{i-j}{n/\rad n} \\ 0 & \text{if~}~ ∏_r p_{l_r} ∤ \frac{i-j}{n/\rad n} \end{cases} \\
\]

  Note that since $\frac{n}{\rad n}\mid i-j$, then $n\mid (i-j)\rad(n)$, and $ζ_n^{(i-j)t\rad(n)} = 1$. So, the last term of our summation is $${(-1)}^{ω(n)} ∑_{t=0}^{n/(\rad(n))-1} ζ_n^{(i-j)t\rad(n)} = {(-1)}^{ω(n)}\frac{n}{\rad(n)}$$
  
  If there are no primes $p_l$ such that $p\mid \frac{i-j}{n/\rad n}$, then all of the other summations are zero, and $M^† M = {(-1)}^{ω(n)}\frac{n}{\rad(n)}$.
  Otherwise, let $d=\gcd(\frac{i-j}{n/\rad(n)}, n)$. 
  There exist $k = \omega(d)$ primes $q_1,…,q_k$ that do divide $(i-j)/(n/\rad(n))$ and $n$. 
  \vspace{3mm}\\

Let $S = {q_1, q_2, …, q_k}$ be the set of all such primes. Since $\forall q\in S, q \mid (i-j)/(n/\rad(n))$, we know that for any subset $S_1\subset S$, $∏_{q\in S_1}\mid (i-j)/(n/\rad(n))$.

Moreover, if any product contains primes $p$ such that $p\not\in S$, then that product cannot divide $(i-j)/(n/\rad(n))$, as $p\nmid (i-j)/(n/\rad(n))$. \\
Thus, every nonzero term in our summation corresponds exactly to the product of elements in $S_1,\forall S_1\subset S$, and we can rewrite our expression as below.

\vspace{3mm}

Let $c = {(-1)}^{ω(n)-ω(d)}$. We can factor the summation as follows:
\begin{equation*}
    \begin{split}
      {(M_f^† M_f)}_{ij}
&=c⋅\frac{n}{\rad(n)}\Big(q_1⋯ q_k - ∑_{q_{f_1} …  q_{f_{k-1}}\in S}q_{f_1}⋯ q_{f_{k-1}} + … + (-1)^{k-1}∑_{q\in S}q + (-1)^k\Big)\\
&=c⋅\frac{n}{\rad(n)}(q_k-1)\Big(q_1⋯ q_{k-1} - ∑_{q_{f_1} … q_{f_{k-2}}\in S\setminus q_k}q_{f_1}⋯ q_{f_{k-2}} + … +  (-1)^{k-1} \Big)\\
&\vdots\\
&=c\frac{n}{\rad(n)}(q_k-1)(q_{k-1}-1)⋯ (q_2-1)(q_1-1)=c\parens{\frac{n}{\rad n}}∏_{q\in S}φ(q) \\
&=c\parens{\frac{n}{\rad n}}φ\parens{\rad\parens{\gcd\parens{\frac{i-j}{n/\rad\parens{n}},n}}}
\end{split}
\end{equation*}

We get the desired result
\[{(M_f^† M_f)}_{ij} = (-1)^{ω(n)-ω(d)}\parens{\frac{n}{\rad(n)}}φ\parens{\rad d}\]
\end{proof}

\begin{corollary}
Let $f=Φ_n$ be $n^{th}$ cyclotomic polynomial. The $M_f^†M_f$ matrix for $f$ is of the form:
$$M_{Φ_n}^†M_{Φ_n}=\parens{\frac{n}{\rad{n}}}M_{Φ_{\rad(n)}}^† M_{Φ_{\rad(n)}}⊗I_{\frac{n}{\rad{n}}}$$
\end{corollary}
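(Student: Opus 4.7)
The plan is to deduce the corollary directly from Theorem~\ref{1} by reindexing. Set $r=\rad(n)$ and $m=n/r$, so $\varphi(n)=m\,\varphi(r)$ and thus $M_{\Phi_n}^\dagger M_{\Phi_n}$ and $M_{\Phi_r}^\dagger M_{\Phi_r}\otimes I_m$ have the same shape $\varphi(n)\times\varphi(n)$. I would identify a flat index $s\in\{1,\ldots,\varphi(n)\}$ with a pair $(i,k)$, $i\in\{1,\ldots,\varphi(r)\}$, $k\in\{1,\ldots,m\}$, via $s=(i-1)m+k$, so that the $((i,k),(j,l))$-entry of $A\otimes I_m$ is $A_{ij}\delta_{kl}$ and $s-t=(i-j)m+(k-l)$ with $|k-l|<m$.

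First I would apply Theorem~\ref{1} to $\Phi_r$. Since $r$ is squarefree, $r/\rad(r)=1$, so the middle case of that formula is vacuous and
\[(M_{\Phi_r}^\dagger M_{\Phi_r})_{ij}=\begin{cases}\varphi(r) & \text{if } i=j,\\ (-1)^{\omega(r)+\omega(d')}\varphi(\rad(d')) & \text{if } i\ne j,\end{cases}\]
where $d'=\gcd(i-j,r)$. Next, for the same pair $(s,t)=\bigl((i-1)m+k,(j-1)m+l\bigr)$, Theorem~\ref{1} applied to $\Phi_n$ splits into three cases: if $s=t$ then $i=j$ and $k=l$, and the entry is $\varphi(n)=m\,\varphi(r)$; if $k\ne l$ then $m\nmid s-t$, so the entry is $0$; and if $k=l$ with $i\ne j$ then $(s-t)/m=i-j$, and the entry is $(-1)^{\omega(n)+\omega(d)}\,m\,\varphi(\rad(d))$ with $d=\gcd(i-j,n)$.

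Comparing with $m\cdot A_{ij}\delta_{kl}$ on the right, the first two cases match instantly. For the third case I would invoke two elementary observations: $\omega(n)=\omega(r)$, since $n$ and $\rad(n)$ share the same set of prime divisors; and for any integer $a$, $\rad(\gcd(a,n))=\rad(\gcd(a,r))$, because for any prime $p$ we have $p\mid n\iff p\mid r$. Applied with $a=i-j$ this yields $\rad(d)=\rad(d')=d'$ and hence $\omega(d)=\omega(d')$, so the two signed expressions coincide.

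The main obstacle is purely bookkeeping: articulating the pair/flat index identification so that the tensor product structure becomes transparent and the divisibility condition $m\mid s-t$ translates cleanly into $k=l$. Once that is set up, the rest is a three-way case split and the elementary identity $\omega(n)=\omega(\rad(n))$, both of which follow immediately from the formula already established in Theorem~\ref{1}.
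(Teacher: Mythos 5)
Your proof is correct, and it is the natural derivation of the corollary: the paper states it without proof, and the intended argument is indeed to read it off from Theorem~\ref{1}. Setting $m=n/\rad(n)$ and $r=\rad(n)$, the flat-index split $s=(i-1)m+k$ makes the condition $m\mid s-t$ equivalent to $k=l$ (since $|k-l|<m$), which exactly produces the $\otimes I_m$ block structure; the diagonal entries match because $\varphi(n)=m\,\varphi(r)$; and the off-diagonal entries match because $\omega(n)=\omega(\rad(n))$ and $\rad(\gcd(a,n))=\rad(\gcd(a,\rad(n)))$ for all $a$, so the sign and the $\varphi(\rad(\cdot))$ factor agree. Your three-case check is complete and the bookkeeping is set up consistently with the standard Kronecker convention $(A\otimes I_m)_{(i-1)m+k,\,(j-1)m+l}=A_{ij}\delta_{kl}$, so nothing is missing.
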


\begin{remark}
Let the eigenvalues of $M_{\rad(n)}^† M_{\rad(n)}$ be $λ_1,…λ_{φ(\rad(n))}$.
This implies that the eigenvalues of $M_n^† M_n$ are $\frac{n}{\rad{n}}λ_1,…,\frac{n}{\rad{n}}λ_{φ(\rad(n))}$ with multiplicity $\frac{n}{\rad{n}}$.
In particular, for a prime $p$, $M^† M=pI_{φ(p)}-1_{φ(p)}$. 
Also, in particular, for any number $n$ with prime factor $p$, $M_{Φ_{np}}^† M_{Φ_{np}}=pM_{Φ_n}⊗I_p$.
\end{remark}

\begin{remark}
Note that $M_{Φ_n}^† M_{Φ_n}$ forms a symmetric Toeplitz matrix. \footnote{A Toeplitz matrix, or a diagonal-constant matrix, is a matrix $A$ such that $A_{i,j}=A_{i+1,j+1}$} 

We can also describe the $M_{Φ_n}^† M_{Φ_n}$ matrix's construction as follows:
\begin{itemize}
\item Let $t=p_1\dotsm p_s$ be a squarefree integer. Then the matrix $M^† M$ for $Φₜ$ is given by the symmetric Toeplitz matrix generated by the vector $v$, where $v$ is constructed as follows:
\begin{enumerate}
    \item Let $v$ be a constant vector of value $(-1)^s$ of length $φ(t)$, indexed by $i$ from 0 to $φ(t)-1$.
    \item For all $i$, if $p_j$ divides $i$, then let $v[i]←-φ(p_j)*v[i]$
\end{enumerate}
\item Let $n=p_1^{k_1}\dotsm p_s^{k_s}$ be an arbitrary integer and $L$ be the Toeplitz matrix of $s$ as constructed above. Then the matrix $M_f^† M_f$ for $n$ is given by $\frac{n}{\rad n}L⊗I_{\frac{n}{\rad n}}$ where $I_q$ is the identity matrix of size $q$.
\item Equivalently, the matrix for $n$ can be given by 
\[
\parens{n/s}\parens{\circ_{i=1}^s 
\parens{\sqbracks{\begin{array}{c|c}1_{φ(s)} & 0_{⌈φ(s)/p⌉p}\end{array}}*\parens{p*1_{⌈φ(s)/p⌉}⊗I_p}*\sqbracks{\begin{array}{c} 1_{φ(s)} \\ \hline 0_{⌈φ(s)/p⌉p}\end{array}}-1_{φ(s)}}}⊗I_{n/s}\]
where $\circ$ denotes the Hadamard, or entrywise, product.
\end{itemize}

\end{remark}

\begin{example}
For $f=Φ_{15}$, we have a symmetric Toeplitz matrix

  \center
  \[\begin{bmatrix}
8 & 1 & 1 & -2 & 1 & -4 & -2 & 1 \\
1 & 8 & 1 & 1 & -2 & 1 & -4 & -2 \\
1 & 1 & 8 & 1 & 1 & -2 & 1 & -4 \\
-2 & 1 & 1 & 8 & 1 & 1 & -2 & 1\\
1 & -2 & 1 & 1 & 8 & 1 & 1 & -2 \\
-4 & 1 & -2 & 1 & 1 & 8 & 1 & 1 \\
-2 & -4 & 1 & -2 & 1 & 1 & 8 & 1\\
1 & -2 & -4 & 1 & -2 & 1 & 1 & 8
\end{bmatrix}
\]
\end{example}

We can use this rich structure to derive more specific properties of spectral distortion for cyclotomic polynomials.The following theorem shows that the spectral distortion of the $n$th cyclotomic polynomial depends only on the radical of $n$.

\begin{corollary}\label{5}
\[\SD(Φ_n)=\SD(Φ_{\rad n})\]
\end{corollary}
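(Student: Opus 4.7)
The plan is to push the Kronecker-product identity from the preceding corollary, namely
\[M_{\Phi_n}^\dagger M_{\Phi_n} = \tfrac{n}{\rad n}\, M_{\Phi_{\rad n}}^\dagger M_{\Phi_{\rad n}} \otimes I_{n/\rad n},\]
through the defining formula $\SD(f) = |\det M_f|^{1/\deg f}/\sigma_{\min}(M_f)$ and observe that the two $n/\rad n$ factors that appear cancel exactly. Write $m = n/\rad n$ and $N' = \varphi(\rad n)$ for brevity. The essential numerical input is the identity $\varphi(n) = m \cdot N'$, which follows immediately from $\varphi(n) = n\prod_{p \mid n}(1-1/p)$.

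First I would read off the singular-value spectrum of $M_{\Phi_n}$ from the Kronecker identity: if the eigenvalues of $M_{\Phi_{\rad n}}^\dagger M_{\Phi_{\rad n}}$ are $\lambda_1,\ldots,\lambda_{N'}$, then, as noted in the remark following the corollary, the eigenvalues of $M_{\Phi_n}^\dagger M_{\Phi_n}$ are $m\lambda_1,\ldots,m\lambda_{N'}$, each with multiplicity $m$. Taking square roots gives
\[\sigma_{\min}(M_{\Phi_n}) = \sqrt{m}\,\sigma_{\min}(M_{\Phi_{\rad n}}).\]

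Next I would compute the determinant factor. Since $\det(M_{\Phi_n}^\dagger M_{\Phi_n}) = \prod_{i=1}^{N'} (m\lambda_i)^m = m^{mN'}\bigl(\det M_{\Phi_{\rad n}}^\dagger M_{\Phi_{\rad n}}\bigr)^m$, and $\varphi(n) = mN'$, one obtains
\[|\det M_{\Phi_n}|^{1/\varphi(n)} = m^{1/2}\,|\det M_{\Phi_{\rad n}}|^{m/(mN')} = \sqrt{m}\,|\det M_{\Phi_{\rad n}}|^{1/N'}.\]

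Finally I would substitute both expressions into the definition of spectral distortion:
\[\SD(\Phi_n) = \frac{\sqrt{m}\,|\det M_{\Phi_{\rad n}}|^{1/N'}}{\sqrt{m}\,\sigma_{\min}(M_{\Phi_{\rad n}})} = \frac{|\det M_{\Phi_{\rad n}}|^{1/\varphi(\rad n)}}{\sigma_{\min}(M_{\Phi_{\rad n}})} = \SD(\Phi_{\rad n}).\]
There is no real obstacle here beyond careful bookkeeping of the exponents; the proof is essentially the observation that every quantity occurring in $\SD$ is homogeneous of the right degree so that the scaling by $\sqrt{n/\rad n}$ coming from the Kronecker factor is identical in numerator and denominator.
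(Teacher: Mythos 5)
Your proof is correct.  The calculation is sound: from the Kronecker identity $M_{\Phi_n}^\dagger M_{\Phi_n} = \tfrac{n}{\rad n}\, M_{\Phi_{\rad n}}^\dagger M_{\Phi_{\rad n}} \otimes I_{n/\rad n}$, the eigenvalues indeed scale by $m = n/\rad n$ and each acquires multiplicity $m$, the identity $\varphi(n) = m\,\varphi(\rad n)$ is right, and the two factors of $\sqrt{m}$ do cancel in the quotient.

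The paper's proof has the same skeleton but differs in two ways, both cosmetic in substance but worth noting.  First, the paper reduces one prime at a time, showing $\SD(\Phi_{np}) = \SD(\Phi_n)$ for a prime $p$ already dividing $n$, whereas you collapse $n$ to $\rad n$ in a single step; yours is slightly more economical.  Second, and more substantively, the paper computes the determinant factor from the explicit cyclotomic discriminant formula $|\Disc(\Phi_n)| = n^{\varphi(n)} / \prod_{p\mid n} p^{\varphi(n)/(p-1)}$, and then separately invokes the eigenvalue scaling from Theorem~\ref{1} to handle $\sigma_{\min}$.  You instead read off \emph{both} the determinant scaling and the $\sigma_{\min}$ scaling from the eigenvalue spectrum of the Kronecker product, so the discriminant formula never enters.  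This makes your argument more self-contained — everything follows from the structure established in Theorem~\ref{1} and its corollary — whereas the paper imports an external number-theoretic fact.  The trade-off is that the paper's route makes the exact value of $|\det M_{\Phi_n}|^{1/\varphi(n)}$ visible, which it reuses later in the corollary giving $\SD(\Phi_p) = p^{(p-2)/(2(p-1))}$.
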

\begin{proof}
Let $n≥1$.
Let $p$ be a prime that divides $n$.
We show $\SD(Φ_n)=\SD(Φ_{np})$.
For cyclotomic polynomials, $|\Disc(Φ_n)|=\frac{n^{φ(n)}}{∏_{p|n}(p^{φ(n)/p-1})}$.
\[\det(M_{Φ_{np}})^{1/φ(np)}=\sqrt{\frac{(np)^{φ(np)}}{∏_{p|n}(p^{φ(np)/(p-1)})}}^{1/φ(np)}=\sqrt{\frac{np}{∏_{p|n}(p^{1/(p-1)})}}\]
\[\det(M_{Φ_n})^{1/φ(n)}=\sqrt{\frac{n^{φ(n)}}{∏_{p|n}(p^{φ(n)/p-1})}}^{1/φ(n)}=\sqrt{\frac{n}{∏_{p|n}(p^{1/(p-1}))}}\]
$$\implies \det(M_{Φ_{np}})^{1/φ(np)} = \sqrt{p}\det(M_{Φ_n})^{1/φ(n)}$$
We see in Theorem \ref{1} that the largest eigenvalue of $M_{Φ_{np}}$ increases by a factor of $p$, so $‖M_{Φ_{np}}‖ = \sqrt{p}‖M_{Φ_{n}}‖$. Thus, we have
$$\SD(\Phi_{np}) = \frac{|\det M_{Φ_{np}}|^{\frac{1}{\varphi(np)}}}{σ_{\min}(M_{\Phi_{np}})} = \frac{\sqrt{p}|\det M_{Φ_{n}}|^{\frac{1}{\varphi(n)}}}{\sqrt{p}\cdotσ_{\min}(M_{\Phi_{n}})} = \SD(\Phi_{n})$$
\end{proof}

\begin{theorem}\label{2}
The eigenvalues of $M_{Φ_p}^† M_{Φ_p}$ for prime $p$ are 1 with multiplicity 1 and $p$ with multiplicity $p-2$.
\end{theorem}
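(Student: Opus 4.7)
The plan is to reduce the computation of the spectrum to a well-known eigenvalue problem for the all-ones matrix.

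First I would specialize Theorem \ref{1} (or equivalently the Remark immediately following Corollary 3.2) to the case $n = p$. Here $\omega(p) = 1$, $\operatorname{rad}(p) = p$, so $n/\operatorname{rad}(n) = 1$. The congruence condition $\frac{n}{\operatorname{rad}(n)} \mid i - j$ is automatic, so every off-diagonal entry is nonzero. For $i \neq j$ with $i, j \in \{0, 1, \ldots, p-2\}$, we have $0 < |i-j| < p$, hence $d = \gcd(i - j,\, p) = 1$, giving $\omega(d) = 0$ and $\varphi(\operatorname{rad}(d)) = \varphi(1) = 1$. Thus $(M_{\Phi_p}^\dagger M_{\Phi_p})_{ij} = (-1)^{1+0} = -1$ for $i \neq j$, while the diagonal entries equal $\varphi(p) = p - 1$. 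Collecting,
\[
M_{\Phi_p}^\dagger M_{\Phi_p} \;=\; (p-1)I_{p-1} - (J_{p-1} - I_{p-1}) \;=\; p\,I_{p-1} - J_{p-1},
\]
where $J_{p-1}$ denotes the $(p-1) \times (p-1)$ all-ones matrix.

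Next I would diagonalize $J_{p-1}$. Since $J_{p-1}$ has rank one with column space spanned by $(1, 1, \ldots, 1)^\top$, its eigenvalues are $p-1$ with multiplicity $1$ (eigenvector $\mathbf{1}$) and $0$ with multiplicity $p-2$ (eigenspace $\{v : \mathbf{1}^\top v = 0\}$). Because $I_{p-1}$ commutes with $J_{p-1}$, the matrix $p I_{p-1} - J_{p-1}$ shares the same eigenvectors, and its eigenvalues are $p - (p-1) = 1$ with multiplicity $1$ and $p - 0 = p$ with multiplicity $p-2$. This yields exactly the statement of the theorem.

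There is no real obstacle here — the work is concentrated in the specialization step, where one must carefully check that the off-diagonal case of Theorem \ref{1} indeed collapses to the constant $-1$ for every pair $i \neq j$ when $n = p$. Once that structural identity $M_{\Phi_p}^\dagger M_{\Phi_p} = p I - J$ is in hand, the eigenvalue count is a standard one-line computation using the rank-one structure of $J$.
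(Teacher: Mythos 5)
Your proof is correct, and it takes a genuinely different route from the paper's. After the common specialization step (Theorem \ref{1} with $n=p$ gives a constant diagonal of $p-1$ and constant off-diagonal of $-1$), the paper treats $M_{\Phi_p}^\dagger M_{\Phi_p}$ as a circulant matrix and computes each eigenvalue $\lambda_j = (p-1) - \sum_{k=1}^{p-2}\zeta^{jk}$ via the standard DFT-based circulant eigenvalue formula, splitting into the cases $j=0$ and $j\neq 0$. You instead write the matrix as the rank-one perturbation $pI_{p-1} - J_{p-1}$ (an identity the paper records in the remark after Corollary 3.2 but does not use in this proof) and read off the spectrum from the rank-one structure of $J$. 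Your argument is the more elementary of the two: it requires no appeal to circulant theory or roots of unity, only the observation that $J$ has eigenvalue $p-1$ on $\mathbf{1}$ and $0$ on its hyperplane complement. The circulant approach is more uniform in the sense that it applies verbatim to any circulant matrix arising from Theorem \ref{1}, whereas the $pI-J$ decomposition is specific to the prime case; but for this theorem your route is cleaner. One small point worth tightening: you should note that $M_{\Phi_p}^\dagger M_{\Phi_p}$ is a $(p-1)\times(p-1)$ matrix with indices running over a range of length $p-1$, so that $0 < |i-j| \le p-2 < p$ and hence $\gcd(i-j,p)=1$ for $i\neq j$ — you do say this, and it is exactly the check that makes the off-diagonal entries collapse to $-1$.
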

\begin{proof}
By~\ref{1}, $M_{Φ_p}^† M_{Φ_p}$ is a circulant matrix with row entries $c₀=p-1,c_1=⋯=c_{p-2}=-1$.
By well-known properties of circulant matrix eigenvalues, for $0≤j<p-2$, the eigenvalues of $M_{Φ_p}^† M_{Φ_p}$ are of the form
\begin{equation*}
    \begin{split}
        λ_j
        &=c₀+∑_{k=1}^{p-2}c_{p-1-k}ζ^{jk} \\
        &=(p-1)-∑_{k=1}^{p-2}ζ^{jk} \\
    \end{split}
\end{equation*}
If $j=0$, then
$$(p-1)-∑_{k=1}^{p-2}ζ^{jk} = (p-1)-∑_{k=1}^{p-2}1 = (p-1) - (p-2) = 1$$
For the other $p-2$ cases, $j≠0$, and 
$$(p-1)-∑_{k=1}^{p-2}ζ^{jk} = (p-1) + ζ^0 - ζ^0 -∑_{k=1}^{p-2}ζ^{jk}= (p-1) + 1 -∑_{k=0}^{p-2}ζ^{jk} = (p-1) + 1 - 0 = p$$
\end{proof}

\begin{corollary}
For prime $p$,
$$\SD(Φ_{p}) = p^{\frac{p-2}{2(p-1)}}$$
\end{corollary}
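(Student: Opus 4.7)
The plan is to substitute the eigenvalue data from Theorem~\ref{2} directly into the definition of spectral distortion, which reduces the corollary to a short arithmetic computation. Since $\Phi_p$ has degree $\varphi(p)=p-1$, the matrix $M_{\Phi_p}$ is $(p-1)\times(p-1)$, and the singular values of $M_{\Phi_p}$ are the positive square roots of the eigenvalues of $M_{\Phi_p}^{\dagger}M_{\Phi_p}$. Theorem~\ref{2} gives these eigenvalues as $1$ with multiplicity $1$ and $p$ with multiplicity $p-2$, so the singular values of $M_{\Phi_p}$ are $1$ (once) and $\sqrt{p}$ (with multiplicity $p-2$).

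From this I would immediately read off the two quantities appearing in $\SD$. First, the smallest singular value is
\[
\sigma_{\min}(M_{\Phi_p})=1.
\]
Second, since $|\det M_{\Phi_p}|$ equals the product of the singular values,
\[
|\det M_{\Phi_p}|=1\cdot(\sqrt{p})^{p-2}=p^{(p-2)/2},
\]
so that
\[
|\det M_{\Phi_p}|^{1/\varphi(p)}=p^{(p-2)/(2(p-1))}.
\]

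Plugging into the definition
\[
\SD(\Phi_p)=\frac{|\det M_{\Phi_p}|^{1/\varphi(p)}}{\sigma_{\min}(M_{\Phi_p})}
\]
yields $\SD(\Phi_p)=p^{(p-2)/(2(p-1))}$, as desired. There is no real obstacle here: the only thing one needs to be careful about is the translation between eigenvalues of $M_f^{\dagger}M_f$ and singular values of $M_f$, together with the formula $|\det M_f|=\prod_i \sigma_i(M_f)$, which is justified by the remark in Section~2.2 noting that $M_f^{\dagger}M_f$ is a real symmetric matrix and that $M_f$ and $B^{\dagger}M_f$ have the same singular values (because $B$ is unitary). Once these identifications are in place, the corollary follows by direct substitution.
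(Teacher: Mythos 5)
Your proof is correct and reaches the same conclusion, but by a slightly different route than the paper. Both proofs take $\sigma_{\min}(M_{\Phi_p})=1$ from Theorem~\ref{2}. The difference is in how $|\det M_{\Phi_p}|^{1/\varphi(p)}$ is evaluated: the paper invokes the discriminant formula $|\Disc(\Phi_n)|=n^{\varphi(n)}/\prod_{p\mid n}p^{\varphi(n)/(p-1)}$ and the relation $|\det M_f|=\sqrt{|\Disc(f)|}$, whereas you obtain the determinant as the product of the singular values $1\cdot(\sqrt{p})^{p-2}=p^{(p-2)/2}$, which are already known from the eigenvalue data in Theorem~\ref{2}. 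Your version is more self-contained, in that it extracts everything from the single spectral fact and needs no external discriminant formula; the paper's version is the one it reuses elsewhere (e.g.\ in Corollaries~\ref{5} and~\ref{4}), where the eigenvalues are not fully known and the discriminant formula is the only available handle on the determinant, so the authors keep a uniform style. Both are valid, and the arithmetic in your write-up checks out.
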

\begin{proof}
For cyclotomic polynomials, $|\Disc(Φ_n)|=\frac{n^{φ(n)}}{∏_{p|n}(p^{φ(n)/p-1})}$.
\[\det(M_{p})^{1/(p-1)}=\sqrt{\frac{(p)^{(p-1)}}{(p^{(p-1)/(p-1)})}}^{1/(p-1)}= \sqrt{\frac{p^{p-1}}{p}}^{1/(p-1)} = p^{\frac{p-2}{2(p-1)}}\]
We know that $Det(M^{-1}) = \big(Det(M)^{1/(p-1)}\big)^{-1} = p^{-\frac{p-2}{2(p-1)}}$. 
We know also from~\ref{2} that the smallest eigenvalue of $M_{Φ_p}^†M_{Φ_p}$ for prime $p$ is 1. 
So, $$\norm{M_{Φ_p}^{-1}} = \frac{1}{σ_{min}(M_{Φ_p})} = 1$$
$$\SD(Φ_n) = \frac{\norm{M_{Φ_p}^{-1}}_2}{|\det(M_{Φ_p}^{-1})^{1/(p-1)}|} = \frac{1}{p^{-\frac{p-2}{2(p-1)}}} = p^{\frac{p-2}{2(p-1)}}$$
\end{proof}

\begin{lemma}\label{cyc_2n}
The $M^{\dagger}_fM_f$ matrix for $f = \Phi_{2n}$, $2\nmid n$, is of the form:
$$\left(M^{\dagger}_{\Phi_{2n}}M_{\Phi_{2n}}\right)_{ij} = (-1)^{i+j}\left(M^{\dagger}_{\Phi_n}M_{\Phi}\right)_{ij}$$
\end{lemma}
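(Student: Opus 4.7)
The plan is to exploit the classical identity $\Phi_{2n}(x) = \Phi_n(-x)$, valid for all odd $n > 1$. This means the roots of $\Phi_{2n}$ are precisely the negatives of the roots of $\Phi_n$: if $\alpha_1, \ldots, \alpha_{\varphi(n)}$ enumerate the primitive $n$-th roots of unity, then $-\alpha_1, \ldots, -\alpha_{\varphi(n)}$ enumerate the primitive $2n$-th roots of unity (and $\varphi(2n) = \varphi(n)$, so the matrix sizes match). The identity itself follows from the bijection $k \leftrightarrow k + n \pmod{2n}$ between units modulo $n$ and units modulo $2n$ when $n$ is odd, a fact I would either cite or remark on briefly.

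With the roots paired up this way, I would write the Vandermonde matrices entrywise. For $f = \Phi_n$ we have $(M_{\Phi_n})_{kj} = \alpha_k^{j-1}$, and for $f = \Phi_{2n}$ we have $(M_{\Phi_{2n}})_{kj} = (-\alpha_k)^{j-1} = (-1)^{j-1}\alpha_k^{j-1}$, so that $(M_{\Phi_{2n}})_{kj} = (-1)^{j-1}(M_{\Phi_n})_{kj}$. This factored form is the whole engine of the proof.

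The computation then is direct. For each pair $(i,j)$,
\begin{equation*}
\left(M_{\Phi_{2n}}^{\dagger} M_{\Phi_{2n}}\right)_{ij}
= \sum_{k=1}^{\varphi(n)} \overline{(-\alpha_k)^{i-1}}\,(-\alpha_k)^{j-1}
= (-1)^{(i-1)+(j-1)} \sum_{k=1}^{\varphi(n)} \overline{\alpha_k^{i-1}}\,\alpha_k^{j-1},
\end{equation*}
and the sum on the right is exactly $\left(M_{\Phi_n}^{\dagger} M_{\Phi_n}\right)_{ij}$. Since $(-1)^{(i-1)+(j-1)} = (-1)^{i+j}$, the lemma follows.

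I do not expect a serious obstacle. The only subtle point is justifying that the roots truly pair up as $\alpha \leftrightarrow -\alpha$: this uses that $n$ is odd (so $\gcd(2, n) = 1$ and the Chinese Remainder Theorem gives $(\Z/2n\Z)^\times \cong (\Z/2\Z)^\times \times (\Z/n\Z)^\times$, forcing the unique nontrivial unit modulo $2$ to be attached). A brief sentence to this effect, or a reference to the standard identity $\Phi_{2n}(x) = \Phi_n(-x)$ for odd $n>1$, suffices.
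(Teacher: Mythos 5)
Your proof is correct, and it takes a genuinely different route from the paper. The paper's own proof is a case analysis: it invokes the explicit closed form for $(M_{\Phi_m}^\dagger M_{\Phi_m})_{ij}$ established in Theorem~\ref{1} and checks the claimed sign relation separately on the diagonal, in the case $\frac{2n}{\rad(2n)}\nmid (i-j)$, and in the two subcases of $\frac{2n}{\rad(2n)}\mid(i-j)$ according to the parity of $i-j$ (tracking how $\omega$ and $d$ change when passing from $n$ to $2n$). You instead go straight to the Vandermonde definition and use the classical identity $\Phi_{2n}(x)=\Phi_n(-x)$ for odd $n$, which pairs the roots $\alpha\leftrightarrow -\alpha$, giving $(M_{\Phi_{2n}})_{kj}=(-1)^{j-1}(M_{\Phi_n})_{kj}$ up to a reordering of rows (which is harmless since $M^\dagger M$ is invariant under row permutations). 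The sign $(-1)^{i+j}$ then pops out of a one-line computation. Your argument is shorter, does not depend on Theorem~\ref{1} at all, and makes the structural reason for the sign pattern transparent; the paper's argument has the modest advantage of staying entirely within the framework of its Theorem~\ref{1} formula. One small point you might make explicit: for $n=1$ one has $\Phi_2(x)=-\Phi_1(-x)$ rather than $\Phi_1(-x)$, but the roots are still negatives of one another, so the row-by-row sign factorization and hence the conclusion still hold.
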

\begin{proof}
Note that since $2\nmid n$, $\phi(2n) = 2\left(1-\frac{1}{2}\right)\phi(n) = \phi(n)$, and $\frac{2n}{\rad(2n)} = \frac{2n}{2\rad(n)} = \frac{n}{\rad(n)}$. We need to check each case given in~\ref{1}.\vspace{3mm}\\
\textbf{Case 1:} $i = j$\vspace{3mm}\\
In this case, $$\left(M^{\dagger}_{\Phi_{2n}}M_{\Phi_{2n}}\right)_{ij} = \phi(2n) = 2\left(1-\frac{1}{2}\right)\phi(n) = \phi(n)$$
$$= \left(M^{\dagger}_{\Phi_{n}}M_{\Phi_{n}}\right)_{ij} = (-1)^{i+j}\left(M^{\dagger}_{\Phi_{n}}M_{\Phi_{n}}\right)_{ij}$$
as $2\mid (i+j)$.\vspace{3mm}\\
\textbf{Case 2:} $\frac{2n}{\rad(2n)}\nmid (i-j)$\vspace{3mm}\\
Since $\frac{2n}{\rad(2n)} = \frac{n}{\rad(n)}$, then $\frac{n}{\rad(n)}\nmid (i-j)$, and 
$$\left(M^{\dagger}_{\Phi_{2n}}M_{\Phi_{2n}}\right)_{ij} = 0 = (-1)^{i+j}\left(M^{\dagger}_{\Phi_{n}}M_{\Phi_{n}}\right)_{ij}$$\vspace{3mm}\\
\textbf{Case 3:} $\frac{2n}{\rad(2n)}\mid (i-j)$\vspace{3mm}\\
Recall that $ω(n)$ is the number of distinct prime factors of $n$. Note that $ω(2n) = ω(n) + 1$, as $2\nmid n$.\vspace{3mm}\\ Consider when $2\nmid (i-j)$. Then, $2\nmid \frac{i-j}{n/\rad(n)}$, and
$$\gcd\parens{\frac{i-j}{2n/\rad(2n)}, 2n} = \gcd\parens{\frac{i-j}{n/\rad(n)}, 2n} = \gcd\parens{\frac{i-j}{n/\rad(n)}, n}$$
so $d_{2n} = d_n$. Thus,
$$\left(M^{\dagger}_{\Phi_{2n}}M_{\Phi_{2n}}\right)_{ij} = (-1)^{s_n + \omega(d_n) + 1}\left(\frac{n}{\rad(n)}\right)\phi(\rad(d_n)) $$$$= -\left(M^{\dagger}_{\Phi_{n}}M_{\Phi_{n}}\right)_{ij} = (-1)^{i+j}\left(M^{\dagger}_{\Phi_{n}}M_{\Phi_{n}}\right)_{ij}$$\vspace{3mm}\\
Consider now when $2\mid(i-j)$. Then, $2\mid \frac{i-j}{n/\rad(n)}$, and 
$$\gcd\parens{\frac{i-j}{2n/\rad(2n)}, 2n} = 2\gcd\parens{\frac{i-j}{n/\rad(n)}, n}$$
so $d_{2n} = 2d_n$, and $\omega(d_{2n}) = \omega(d_{2n})+1$. Thus,
$$\left(M^{\dagger}_{\Phi_{2n}}M_{\Phi_{2n}}\right)_{ij} = (-1)^{s_n + \omega(d_n) + 1 + 1}\left(\frac{n}{\rad(n)}\right)\phi(\rad(d_n))$$
$$= \left(M^{\dagger}_{\Phi_{n}}M_{\Phi_{n}}\right)_{ij} = (-1)^{i+j}\left(M^{\dagger}_{\Phi_{n}}M_{\Phi_{n}}\right)_{ij}$$
\end{proof}

\begin{lemma}\label{cyc_2n_lemma}
Let $A$ be a matrix.
The matrix $((-1)^{i+j} A_{ij})_{ij}$ has the same eigenvalues as $A$.
\end{lemma}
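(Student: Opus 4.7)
The plan is to exhibit an explicit similarity transformation between $A$ and the matrix $B := ((-1)^{i+j} A_{ij})_{ij}$. Since similar matrices have identical characteristic polynomials (hence identical spectra, with multiplicities), this will finish the proof.

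First I would define the diagonal sign matrix $D$ with entries $D_{ii} = (-1)^i$ and $D_{ij} = 0$ for $i \neq j$ (the indexing convention is inherited from the statement). A direct computation gives
\[
(DAD)_{ij} = \sum_{k,\ell} D_{ik} A_{k\ell} D_{\ell j} = (-1)^i A_{ij} (-1)^j = (-1)^{i+j} A_{ij},
\]
so $DAD = B$.

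Next I would observe that $D^2 = I$ since each diagonal entry satisfies $((-1)^i)^2 = 1$. Hence $D$ is invertible with $D^{-1} = D$, and the equation above can be rewritten as $B = D A D^{-1}$. This exhibits $A$ and $B$ as similar matrices, which immediately implies they have the same characteristic polynomial and therefore the same eigenvalues (with multiplicities).

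There is no real obstacle here; the only thing to watch is the indexing convention (whether the diagonal entries are $(-1)^i$ starting from $i=0$ or $i=1$ is immaterial, since shifting all signs just replaces $D$ by $-D$, and $(-D)A(-D) = DAD$ either way). The argument is a standard conjugation-by-signs trick and is independent of any other structure of $A$, so in particular it will apply to $A = M_{\Phi_n}^\dagger M_{\Phi_n}$ in the application to Lemma~\ref{cyc_2n}.
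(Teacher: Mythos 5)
Your proof is correct, and it takes a genuinely different route from the paper. The paper attacks the characteristic polynomial directly via the Leibniz formula, expanding $\det(\lambda I - B)$ as a sum over permutations $\sigma$ and observing that the accumulated sign $\prod_i (-1)^{i+\sigma(i)}$ equals $1$ because it factors as $\bigl(\prod_i (-1)^i\bigr)\bigl(\prod_i (-1)^{\sigma(i)}\bigr)$, a perfect square; hence each permutation term is unchanged and the characteristic polynomial is preserved. You instead exhibit the explicit similarity $B = DAD^{-1}$ with $D = \mathrm{diag}((-1)^i)$, $D^2 = I$, which immediately gives equality of characteristic polynomials. The two proofs encode the same underlying fact --- your diagonal conjugation is precisely what produces the $(-1)^{i+\sigma(i)}$ sign pattern inside the Leibniz sum --- but your version is cleaner, more conceptual, and avoids the notational awkwardness in the paper's expansion (where $\lambda I$ appears inside the product over $i$ in a way that only parses if one reads it as the $(i,\sigma(i))$ entry of $\lambda I - B$). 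Your argument also generalizes trivially to conjugation by any diagonal matrix of unit entries, whereas the Leibniz-formula route obscures that structure. Both establish the lemma; yours is the more standard and transparent presentation.
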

\begin{proof}
The eigenvalues of $A$ are defined by the characteristic equation $\det(λI-A)$.

By the Leibniz formula for determinants,
\[\det(λI-((-1)^{i+j}A_{ij})_{ij})=∑_{σ} (-1)^{σ} ∏_{i} (λI - (-1)^{i+σ(i)} A_{iσ(i)})\]

Taking out the identity permutation, we have
\[∏_{i} (λI-A_{ii}) + ∑_{σ/i} (-1)^{σ} ∏_{i} (-1)^{i+σ(i)}A_{iσ(i)}\]

Because
\[∏_i (-1)^{i + σ(i)} = ∏_i (-1)^i ∏_i (-1)^{σ(i)} = (-1^{\frac{φ(n)(φ(n)+1)}{2}})^2 = 1\]

We have
\[∏_{i} (λI-A_{ii}) + ∑_{σ/i} (-1)^{σ} ∏_{i} A_{iσ(i)}=\det(λI-A)\]
\end{proof}

\begin{theorem}\label{3}
Let $n∈ℕ$ be odd.
The eigenvalues of $M_{Φ_{2n}}^† M_{Φ_{2n}}$ are the same as the  eigenvalues of $M_{Φ_{n}}^† M_{Φ_{n}}$.
\end{theorem}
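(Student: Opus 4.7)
The plan is to combine the two preceding lemmas in sequence, since together they essentially contain the entire content of Theorem \ref{3}. Because $n$ is odd, I first observe that $\varphi(2n) = \varphi(n)$, so the matrices $M_{\Phi_{2n}}^\dagger M_{\Phi_{2n}}$ and $M_{\Phi_n}^\dagger M_{\Phi_n}$ live in the same $\varphi(n) \times \varphi(n)$ space; any eigenvalue comparison is at least dimensionally sensible.

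Next I would invoke Lemma \ref{cyc_2n} to rewrite
\[
\bigl(M_{\Phi_{2n}}^\dagger M_{\Phi_{2n}}\bigr)_{ij} = (-1)^{i+j} \bigl(M_{\Phi_n}^\dagger M_{\Phi_n}\bigr)_{ij}.
\]
That is, $M_{\Phi_{2n}}^\dagger M_{\Phi_{2n}}$ is precisely the entry-wise sign-flipped matrix obtained from $A := M_{\Phi_n}^\dagger M_{\Phi_n}$ in the sense of Lemma \ref{cyc_2n_lemma}.

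Applying Lemma \ref{cyc_2n_lemma} with this choice of $A$ then gives immediately that $M_{\Phi_{2n}}^\dagger M_{\Phi_{2n}}$ and $M_{\Phi_n}^\dagger M_{\Phi_n}$ share the same characteristic polynomial, hence the same eigenvalues (with multiplicities). There is really no obstacle here: the work was already absorbed into the two lemmas. The only point that merits a word of justification is the dimension match $\varphi(2n)=\varphi(n)$, which ensures Lemma \ref{cyc_2n_lemma} applies coherently. One could alternatively observe that the sign pattern $(-1)^{i+j}$ is realized by conjugation by the diagonal matrix $D=\mathrm{diag}((-1)^1,(-1)^2,\dots,(-1)^{\varphi(n)})$, i.e.\ $M_{\Phi_{2n}}^\dagger M_{\Phi_{2n}} = D\, A\, D^{-1}$, which makes the equality of spectra manifest via similarity; this gives an alternate, more conceptual way to read Lemma \ref{cyc_2n_lemma}.
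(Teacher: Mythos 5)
Your proof is correct and matches the paper's own argument: both invoke Lemma~\ref{cyc_2n} to express $M_{\Phi_{2n}}^\dagger M_{\Phi_{2n}}$ as the sign-flipped version of $M_{\Phi_n}^\dagger M_{\Phi_n}$, then apply Lemma~\ref{cyc_2n_lemma} to conclude equality of spectra. Your closing remark that the sign pattern is a similarity transformation by $D = \mathrm{diag}((-1)^1,\dots,(-1)^{\varphi(n)})$ (so $M_{\Phi_{2n}}^\dagger M_{\Phi_{2n}} = D\,M_{\Phi_n}^\dagger M_{\Phi_n}\,D^{-1}$, with $D^{-1}=D$) is a cleaner, more conceptual justification than the Leibniz-formula computation in the paper's Lemma~\ref{cyc_2n_lemma}, and is worth noting.
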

\begin{proof}
From Lemma \ref{cyc_2n}, we know that
$(M_{Φ_{2n}}^† M_{Φ_{2n}})_{ij}=(-1)^{i+j}(M_{Φ_{n}}^† M_{Φ_{n}})_{ij}$.
The proof then follows directly from the above lemma \ref{cyc_2n_lemma}.
\end{proof}

\begin{corollary}\label{4}
For odd $n$,
$$\SD(Φ_{2n}) = \SD(Φ_n)$$
\end{corollary}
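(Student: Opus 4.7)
The plan is to observe that the Corollary follows almost immediately from Theorem~\ref{3} together with the basic relationship between the eigenvalues of $M^\dagger M$ and the singular values of $M$. Since $n$ is odd, we first record the elementary fact that $\varphi(2n) = \varphi(1)\varphi(2)\cdots = \varphi(2)\varphi(n) = \varphi(n)$, so the matrices $M_{\Phi_{2n}}$ and $M_{\Phi_n}$ are square of the same size $\varphi(n) \times \varphi(n)$, and the exponents $1/\varphi(2n)$ and $1/\varphi(n)$ appearing in the definition of $\SD$ coincide.

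Next, I would invoke Theorem~\ref{3} to transfer equality of eigenvalues of the Gram matrices to equality of singular values of $M_{\Phi_n}$ and $M_{\Phi_{2n}}$. By definition, the singular values of $M_f$ are the nonnegative square roots of the eigenvalues of $M_f^\dagger M_f$; since Theorem~\ref{3} gives
\[
\operatorname{spec}\bigl(M_{\Phi_{2n}}^\dagger M_{\Phi_{2n}}\bigr) = \operatorname{spec}\bigl(M_{\Phi_n}^\dagger M_{\Phi_n}\bigr),
\]
the multisets of singular values of the two Vandermonde matrices agree. In particular $\sigma_{\min}(M_{\Phi_{2n}}) = \sigma_{\min}(M_{\Phi_n})$, which handles the denominator in $\SD$.

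For the numerator, I would use $|\det M_f|^2 = \det(M_f^\dagger M_f) = \prod_i \lambda_i$, where $\lambda_i$ are the eigenvalues of the Gram matrix. Equality of eigenvalue multisets then gives $|\det M_{\Phi_{2n}}| = |\det M_{\Phi_n}|$, and combined with $\varphi(2n) = \varphi(n)$ this yields
\[
|\det M_{\Phi_{2n}}|^{1/\varphi(2n)} = |\det M_{\Phi_n}|^{1/\varphi(n)}.
\]
Dividing numerator by denominator gives $\SD(\Phi_{2n}) = \SD(\Phi_n)$.

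There is no real obstacle here, since all the work has already been done in Theorem~\ref{3}; the present corollary is essentially a bookkeeping statement that translates eigenvalue equality for the Gram matrices into the two quantities ($\sigma_{\min}$ and $|\det|^{1/\varphi}$) that define spectral distortion, using only the elementary identity $\varphi(2n) = \varphi(n)$ for odd $n$. If anything warrants attention, it is simply making sure one states clearly the passage from eigenvalues of $M^\dagger M$ to both the smallest singular value and the absolute determinant before combining them in the defining ratio of $\SD$.
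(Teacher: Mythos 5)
Your proof is correct, but it takes a slightly different and arguably cleaner route than the paper for the determinant comparison. The paper handles the numerator $|\det M_{\Phi_n}|^{1/\varphi(n)}$ by invoking the explicit discriminant formula $|\Disc(\Phi_n)| = n^{\varphi(n)}/\prod_{p\mid n} p^{\varphi(n)/(p-1)}$ and computing both sides directly to verify they agree, then separately appeals to Theorem~\ref{3} only for the $\sigma_{\min}$ part. You instead extract \emph{both} pieces from Theorem~\ref{3}: since $\det(M_f^\dagger M_f) = |\det M_f|^2 = \prod_i \lambda_i$, the equality of eigenvalue multisets immediately gives $|\det M_{\Phi_{2n}}| = |\det M_{\Phi_n}|$ as well as $\sigma_{\min}(M_{\Phi_{2n}}) = \sigma_{\min}(M_{\Phi_n})$, and the only remaining input is the elementary $\varphi(2n)=\varphi(n)$ for odd $n$. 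Your version is more self-contained since it avoids the discriminant formula altogether and keeps all the linear-algebraic content downstream of the single structural fact established in Theorem~\ref{3}; the paper's version is slightly more computational but has the incidental virtue of re-deriving the determinant equality independently (so it doubles as a sanity check on Theorem~\ref{3}). Both are valid.
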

\begin{proof}
First we look at the denominator, $\det(M_{Φ_n})^{1/φ(n)}$:
$$\det(M_{Φ_n})^{1/φ(n)}=\sqrt{\frac{n^{φ(n)}}{∏_{p|n}(p^{φ(n)/p-1})}}^{1/φ(n)}=\sqrt{\frac{n}{∏_{p|n}(p^{1/(p-1}))}}$$
$$\det(M_{Φ_{2n}})^{1/φ(2n)}=\sqrt{\frac{(2n)^{φ(2n)}}{∏_{p|(2n)}(p^{φ(2n)/p-1})}}^{1/φ(2n)}=\sqrt{\frac{2n}{∏_{p|(2n)}(p^{1/(p-1}))}}$$
$$= \sqrt{\frac{2n}{2\left(∏_{p|n}(p^{1/(p-1}))\right)}} = \sqrt{\frac{n}{∏_{p|n}(p^{1/(p-1}))}}$$
$$\implies \det(M_{Φ_n})^{1/φ(n)} = \det(M_{Φ_{2n}})^{1/φ(2n)}$$
From Theorem~\ref{3}, we know that the eigenvalues of $M_{Φ_{2n}}^†M$ are the same as those of $M_{Φ_n}^†M$, and therefore the spectral norm for $2n$ and $n$ are the same. It follows that $\SD(Φ_{2n}) = \SD(Φ_n)$.
\end{proof}

\subsection{Non-Cyclotomic Polynomials}

We now turn to results that encompass non-cyclotomic polynomials.

\begin{theorem}\label{6}
Let $h(x)$ be a monic, irreducible polynomial over $ℤ$.
Let $f(x)=h(x^k)$.
Let $α_t$ be the roots of $h(x)$.
\[
 (M_{f}^†M_{f})_{ij}
        = \begin{cases}
        k \parens{∑_{\text{real } α_t} α_t^{(i+j)/k} + ∑_{\text{non-real }α_t} α_t^{i/k}
        \overline{α_t}^{j/k}} & \text{if~}~ k \mid i-j \\
        0 & \text{if~}~ k \nmid i-j
        \end{cases}
\]
\end{theorem}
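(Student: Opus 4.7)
The plan is to compute the entries $(M_f^\dagger M_f)_{ij}$ directly from the explicit list of roots of $f$, and then factor the resulting double sum using orthogonality of the $k$-th roots of unity.

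First I would describe the roots of $f(x)=h(x^k)$. Since $\beta$ is a root of $f$ iff $\beta^k$ is a root of $h$, I fix for each root $\alpha_t$ of $h$ a single $k$-th root $\gamma_t$ of $\alpha_t$ (choosing $\gamma_t$ real whenever $\alpha_t$ admits a real $k$-th root, and choosing $\gamma_{\overline{t}}=\overline{\gamma_t}$ for the conjugate mate whenever $\alpha_t$ is non-real). Then the $k\deg(h)$ roots of $f$ are exactly the values $\gamma_t\zeta_k^{\ell}$ for $t$ indexing the roots of $h$ and $\ell=0,1,\dots,k-1$, where $\zeta_k=e^{2\pi i/k}$.

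Next, using the same Vandermonde expansion of $M_f^\dagger M_f$ that appears in the proof of Theorem~\ref{1}, the $(i,j)$ entry factors as
\[
(M_f^\dagger M_f)_{ij}
  \;=\;\sum_{t}\sum_{\ell=0}^{k-1}\overline{\gamma_t\zeta_k^{\ell}}^{\,i}\,(\gamma_t\zeta_k^{\ell})^{j}
  \;=\;\sum_{t}\overline{\gamma_t}^{\,i}\,\gamma_t^{\,j}\;\sum_{\ell=0}^{k-1}\zeta_k^{\ell(j-i)}.
\]
The inner sum is the standard geometric sum of $k$-th roots of unity, giving
\[
\sum_{\ell=0}^{k-1}\zeta_k^{\ell(j-i)}=
\begin{cases} k & \text{if }k\mid (j-i),\\ 0 & \text{otherwise,}\end{cases}
\]
which immediately yields the vanishing case $k\nmid i-j$ of the theorem.

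For the nonvanishing case $k\mid i-j$, the entry reduces to $k\sum_t\overline{\gamma_t}^{\,i}\,\gamma_t^{\,j}$, and I would split this sum according to whether $\alpha_t$ is real or non-real. For real $\alpha_t$ (with $\gamma_t$ taken real), $\overline{\gamma_t}^{\,i}\gamma_t^{\,j}=\gamma_t^{\,i+j}$, which under the natural convention $\alpha_t^{a/k}:=\gamma_t^{a}$ is precisely $\alpha_t^{(i+j)/k}$. For non-real $\alpha_t$, $\overline{\gamma_t}$ is, by our branch choice, the chosen $k$-th root of $\overline{\alpha_t}$, so $\overline{\gamma_t}^{\,i}\gamma_t^{\,j}=\overline{\alpha_t}^{\,i/k}\alpha_t^{\,j/k}$; pairing each conjugate pair $\{\alpha_t,\overline{\alpha_t}\}$ in the non-real sum and invoking the symmetry of $M_f^\dagger M_f$ in $i,j$ rewrites the contribution in the stated form $\alpha_t^{\,i/k}\,\overline{\alpha_t}^{\,j/k}$.

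The computation itself is a one-line application of roots-of-unity orthogonality; the only real subtlety — and where I would concentrate the bookkeeping — is pinning down the indexing convention (0- versus 1-based exponents) and the consistent branch choice for $\gamma_t=\alpha_t^{1/k}$ so that the final expression matches the stated formula exactly. Once these conventions are fixed, no further obstacle remains.
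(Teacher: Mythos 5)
Your proposal is correct and takes essentially the same route as the paper: parametrize the roots of $f=h(x^k)$ as $\zeta_k^{\ell}\alpha_t^{1/k}$, expand the Vandermonde product $(M_f^\dagger M_f)_{ij}$ as a double sum over roots, factor out the geometric sum $\sum_{\ell}\zeta_k^{\ell(i-j)}$, apply roots-of-unity orthogonality for the case split, and then separate real from non-real $\alpha_t$. The only difference is cosmetic --- you place the conjugate on the $i$-exponent where the paper places it on $j$ (harmless, since the matrix is real symmetric) --- and you are more explicit about fixing a consistent branch of $\alpha_t^{1/k}$, a point the paper leaves implicit.
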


\begin{proof}
\begin{equation*}
    \begin{split}
        (M_{f}^†M_{f})_{ij}
        &=∑_{α_t} ∑_{s=0}^{k-1} \parens{ζ_k^s α_t^{1/k}}^i \overline{\parens{ζ_k^s α_t^{1/k}}^j} \\
        &=∑_{α_t} α_t^{i/k} \overline{α_t^{j/k}} ∑_{s=0}^{k-1} ζ_k^{s\parens{i-j}} \\
        &=\begin{cases} k ∑_{α_t} \parens{α_t^{i/k}\overline{α_t}^{j/k}} & \text{if~}~ i-j=0\mod k \\ 0 & \text{if~}~ i-j\neq0\mod k \end{cases} \\
    \end{split}
\end{equation*}
If $i=j\mod k$, then
\[      (M_{f}^†{M_{f}})_{ij}
        = k \parens{∑_{\text{real } α_t} α_{t}^{(i+j)/k} + ∑_{\text{non-real } α_t} α_{t}^{i/k}\overline{α_t}^{j/k}} \]
\end{proof}

\begin{corollary}
Let $s=i\mod k$.
\[(M_f^† M_f)_{ij}=\begin{cases} k{(M_h^† M_h)}_{i'j'}\: h(0)^{s/k} & \text{if~}~k\mid i-j \\ 0 & \text{if~}~k \nmid i-j\end{cases}\]
and
\[M_{f}^† M_{f}=M_{h}^†M_{h}⊗
\begin{bmatrix}
h(0)^{0/k} & 0          & 0 & \cdots &&&&\\
0 & h(0)^{1/k}        & 0 & 0 & \cdots &&& \\
0 & 0 &\ddots        &&&&&  \\
\vdots & 0 &&h(0)^{k-1/k}    &&&&   \\
&\vdots&&&h(0)^{0/k}     &&&    \\
&&&&&h(0)^{1/k}    &&     \\
&&&&&&\ddots    &      \\
&&&&&&&h(0)^{k-1/k}
\end{bmatrix}\]
\end{corollary}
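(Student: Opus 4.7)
My plan is to apply Theorem~\ref{6} directly, by decomposing each of $i$ and $j$ according to its residue modulo $k$. Write $i = ki' + s_i$ and $j = kj' + s_j$ with $0 \le s_i, s_j < k$; the hypothesis $k \mid i-j$ is equivalent to $s_i = s_j$, so set $s := s_i = s_j$ in the nonzero case. The vanishing case $k \nmid i-j$ is precisely the second branch of Theorem~\ref{6}, so nothing needs to be done there.

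In the nonzero case, I decompose $\alpha_t^{i/k} = \alpha_t^{i'}\, \alpha_t^{s/k}$ and $\overline{\alpha_t}^{j/k} = \overline{\alpha_t}^{j'}\, \overline{\alpha_t}^{s/k}$. Substituting into the formula of Theorem~\ref{6}, each summand factors as an ``integer-exponent'' part $\alpha_t^{i'}\overline{\alpha_t}^{j'}$ (which, after summation over roots, is exactly $(M_h^\dagger M_h)_{i'j'}$, using the same real/non-real case split as in Theorem~\ref{6}) times a ``fractional-exponent'' common factor $\alpha_t^{s/k}\overline{\alpha_t}^{s/k}$. Pulling the latter outside the sum and identifying it with the scalar $h(0)^{s/k}$ uses Vieta's identity $\prod_t \alpha_t = \pm h(0)$ (monicity of $h$) together with the branch convention implicit in Theorem~\ref{6}'s parameterization of the roots of $f$. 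Multiplying by the prefactor $k$ from Theorem~\ref{6} yields the stated entrywise formula.

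For the Kronecker-product display, I observe that the entry $(M_f^\dagger M_f)_{ij}$ depends on the ``high-order'' indices $(i',j') = (\lfloor i/k\rfloor, \lfloor j/k\rfloor)$ only through $(M_h^\dagger M_h)_{i'j'}$, and on the residues $(s,t) = (i \bmod k,\, j \bmod k)$ only through $\delta_{st}$ and the scalar $h(0)^{s/k}$. Under the bijection $(i,j) \leftrightarrow ((i',j'),(s,t))$, this is precisely the entrywise description of the Kronecker product $M_h^\dagger M_h \otimes \mathrm{diag}(h(0)^{0/k}, h(0)^{1/k}, \ldots, h(0)^{(k-1)/k})$, as displayed. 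No extra work is needed apart from bookkeeping the block indexing.

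The principal obstacle I anticipate is the branch choice for $\alpha_t^{1/k}$: the symbol $h(0)^{s/k}$ is only defined up to a $k$-th root of unity, and $\alpha_t^{s/k}\overline{\alpha_t}^{s/k}$ is a priori a root-dependent quantity, not manifestly the same scalar for every $t$. Theorem~\ref{6} already fixes a branch via its parameterization $\zeta_k^s \alpha_t^{1/k}$ of the roots of $f$, so I would pin down that convention carefully and verify that the aggregate of the fractional factors recombines into a single $h(0)^{s/k}$. Once this bookkeeping is made precise the corollary drops out as a direct specialization of Theorem~\ref{6}.
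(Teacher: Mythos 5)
Your plan reproduces the paper's algebra up to the point where it breaks, and that is exactly where there is a genuine gap. After the substitution $i = ki'+s$, $j = kj'+s$, the non-real summand factors as $\alpha_t^{i'}\overline{\alpha_t}^{j'}\cdot\lvert\alpha_t\rvert^{2s/k}$, and the real summand as $\alpha_t^{i'+j'}\cdot\alpha_t^{2s/k}$. The fractional factor is therefore a \emph{per-root} quantity: it depends on $\lvert\alpha_t\rvert$, which in general varies from root to root. You acknowledge this worry, but your proposed fix --- invoking Vieta's relation $\prod_t\alpha_t = \pm h(0)$ --- does not resolve it. Vieta controls the product of all roots, hence only the geometric mean of the $\lvert\alpha_t\rvert$, and gives no reason for $\lvert\alpha_t\rvert^{2s/k}$ to be the same scalar for every $t$; without that, the factor simply cannot be pulled out of the sum, and the would-be identity $(M_f^\dagger M_f)_{ij} = k(M_h^\dagger M_h)_{i'j'}h(0)^{s/k}$ is false for generic $h$. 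This is not a branch-of-$k$-th-root bookkeeping issue but a real obstruction.

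The paper's proof finesses this by imposing, at the last step, the restriction that $h(x)=x^2+bx+c$ is a quadratic with negative discriminant. Then $h$ has no real roots, the sum has only the single conjugate pair $\alpha,\overline{\alpha}$, and $\lvert\alpha\rvert^2 = \alpha\overline{\alpha} = c = h(0) > 0$, so $\lvert\alpha\rvert^{2s/k} = h(0)^{s/k}$ is a single well-defined positive scalar that factors out cleanly, and there is no real-root term to worry about. That hypothesis is what makes the corollary true, and your proof needs it too: you should state it explicitly rather than trying to argue the general case, since the general case does not hold. (The Kronecker-product reformulation from the entrywise identity is fine once the entrywise identity is established under that hypothesis.)
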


\begin{proof}
When $i≡ j\mod k$, we have $i = i'k + s$ and $j = j'k + s$ for some $s≤ k$, $i',j'\in ℤ$. Then, 
\begin{equation*}
    \begin{split}
        k\parens{∑_{α∈ℝ} α^{((i'k+s)+(j'k+s))/k} + ∑_{α\not ∈ ℝ} α^{(i'k+s)/k}\overline{α}^{(j'k+s)/k}}
        &= k\parens{∑_{α∈ℝ} α^{(i'+j'+2s)/k} + ∑_{α ∉ ℝ} α^{s/k}\overline{α}^{s/k} α^{i'}\overline{α}^{j'}} \\
        &= k\parens{∑_{α∈ℝ} α^{(i'+j'+2s)/k} + ∑_{α ∉ ℝ} |α|^{2s/k} α^{i'}\overline{α}^{j'}}
    \end{split}
\end{equation*}
If $h(x)$ is a quadratic polynomial with negative discriminant, then $|α|^{2s/k}=h(0)^{s/k}$, so we have the listed results.
\end{proof}

\begin{corollary}
Let $h(x)=x^2+bx+c$ have negative discriminant.
\[M_h^†M_h=\begin{bmatrix}2 & -b \\ -b & 2c\end{bmatrix}\]
$M_h^†M_h$ has characteristic polynomial
\[(λ-2)(λ-c)-b^2=λ^2-(2+2c)λ-b^2\]
And eigenvalues
\[1+c ± \sqrt{b^2+c^2+2c+1}\]
Therefore, we can calculate the eigenvalues and therefore spectral norm of $M_f$ for all $f(x)=h(x^k)$.
\end{corollary}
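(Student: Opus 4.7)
The plan is to verify the claimed matrix by direct computation, read off the eigenvalues via the quadratic formula, and then lift the result to $f(x) = h(x^k)$ using the Kronecker-product structure from the preceding corollary.

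For the matrix itself: since the discriminant of $h$ is negative, its two roots form a complex conjugate pair $\alpha, \bar\alpha$, and Vieta's formulas give $\alpha + \bar\alpha = -b$ and $\alpha\bar\alpha = c$. Writing out the Vandermonde $M_h = \begin{bmatrix}1 & \alpha \\ 1 & \bar\alpha\end{bmatrix}$ and computing $M_h^\dagger M_h$ entry by entry produces diagonal entries $1 + 1 = 2$ and $\bar\alpha\alpha + \alpha\bar\alpha = 2c$, and off-diagonal entries $\alpha + \bar\alpha = -b$, yielding the claimed matrix. Since this matrix is $2\times 2$ and symmetric, its two eigenvalues are immediate from the quadratic formula applied to $\det(\lambda I - M_h^\dagger M_h)$.

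For the extension to $f(x) = h(x^k)$: the preceding corollary gives $M_f^\dagger M_f = M_h^\dagger M_h \otimes D$, where $D$ is a diagonal matrix whose entries are the powers $h(0)^{s/k} = c^{s/k}$ for $s = 0,\dots,k-1$. The spectrum of a Kronecker product of square matrices is the multiset of pairwise products of the component spectra, so every eigenvalue of $M_f^\dagger M_f$ has the form $\lambda_i \cdot c^{s/k}$, where $\lambda_1,\lambda_2$ are the two eigenvalues from the previous step. From these one extracts $\sigma_{\max}(M_f) = \sqrt{\lambda_{\max}(M_f^\dagger M_f)}$ and $\sigma_{\min}(M_f) = \sqrt{\lambda_{\min}(M_f^\dagger M_f)}$, hence the spectral norm (and, combined with the usual discriminant formula for $|\det M_f|$, the spectral distortion).

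The main obstacle is essentially absent: the calculations are elementary, and the only care needed is unpacking the Kronecker structure from the preceding corollary and tracking signs (the hypothesis $b^2 < 4c$ forces $c > 0$, so the fractional powers $c^{s/k}$ are well-defined real numbers). The Minkowski-embedding factors of $\sqrt{2}$ noted in the earlier Remark do not intervene here, since $(B^\dagger M_f)^\dagger (B^\dagger M_f) = M_f^\dagger M_f$, so the bare Vandermonde computation suffices.
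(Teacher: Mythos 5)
Your computation of $M_h^\dagger M_h$ via the Vandermonde matrix $\begin{bmatrix}1 & \alpha\\ 1 & \bar\alpha\end{bmatrix}$ and Vieta is exactly right, and the Kronecker-product reduction for $f(x)=h(x^k)$ matches the route the paper intends (the corollary is stated without proof, but this is clearly the implicit argument). However, you wave at the eigenvalues with ``immediate from the quadratic formula'' without actually carrying it out, and carrying it out exposes an error in the statement itself. The correct characteristic polynomial of $\begin{bmatrix}2 & -b\\ -b & 2c\end{bmatrix}$ is
\[
(\lambda-2)(\lambda-2c)-b^2 \;=\; \lambda^2-(2+2c)\lambda+(4c-b^2),
\]
not the paper's $(\lambda-2)(\lambda-c)-b^2=\lambda^2-(2+2c)\lambda-b^2$ (note the paper's two sides of that equation are not even equal to each other). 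The eigenvalues are therefore
\[
(1+c)\pm\sqrt{(1+c)^2-(4c-b^2)}=(1+c)\pm\sqrt{b^2+c^2-2c+1}=(1+c)\pm\sqrt{b^2+(c-1)^2},
\]
with a minus sign on the $2c$, not the plus sign the statement gives. A quick sanity check: for $h(x)=x^2+1$ the matrix is $2I_2$ with both eigenvalues $2$; the statement's formula yields $2\pm 2$, which is wrong, while the corrected formula gives $2\pm 0$. So your approach is sound, but a faithful execution of it disproves the corollary's eigenvalue claim rather than establishing it; you should flag and correct the sign rather than defer to the printed formula.

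One smaller point on the Kronecker step: the preceding corollary's displayed Kronecker factor appears to be sized $2k\times 2k$, which is dimensionally inconsistent with a tensor against a $2\times 2$ matrix; your reading of it as a $k\times k$ diagonal with entries $c^{s/k}$, $s=0,\dots,k-1$, is the sensible repair, and your observations that $c>0$ (so the fractional powers are real) and that the basis-change $B$ drops out of $M_f^\dagger M_f$ are both correct.
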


\subsection{Bounds on Spectral Distortion}
In \cite{a lower bound}, Hong and Pan derive a lower bound on the smallest singular value of general matrices $A$:

\[σ_{\min}(A) ≥ \parens{\frac{n-1}{n}}^{(n-1)/2} |\det(A)| \max\left\{\frac{r_{\min}(A)}{∏_{i=0}^nr_i(A)}, \frac{c_{\min}(A)}{∏_{i=0}^nc_i(A)}\right\}\]

where $r_i$ is the $L^2$ norm of the $i$th row, and $c_i$ is the $L^2$ norm of the $i$th column.\\
We use this lower bound to create an upper bound for general spectral distortion:
\begin{theorem}\label{7}
Let $r_i$ be the $L^2$ norm of the $i$th row of $M_f$, and $c_i$ be the $L^2$ norm of the $i$th column of $M_f$. 
For a polynomial $f$ of degree $n$,
$$\SD(f) ≤ \parens{\frac{n}{n-1}}^{(n-1)/2} |\det(M_f)|^{\frac{1-n}{n}}
\max\left\{\frac{r_{\min}(M_f)}{∏_{i=0}^nr_i(M_f)}, \frac{c_{\min}(M_f)}{∏_{i=0}^nc_i(M_f)}\right\}$$
\end{theorem}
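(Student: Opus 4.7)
The plan is to derive this bound as an immediate corollary of the Hong--Pan inequality quoted just above the theorem. By definition of spectral distortion,
$$\SD(f) = \frac{|\det M_f|^{1/n}}{\sigma_{\min}(M_f)},$$
so any lower bound on $\sigma_{\min}(M_f)$ translates into an upper bound on $\SD(f)$ after the division on the right is inverted. This suggests a direct substitution strategy: apply Hong--Pan to $A = M_f$ and rearrange.

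First, I would specialize the Hong--Pan inequality to $A = M_f$, using the fact that $M_f$ is an $n \times n$ matrix since $f$ has degree $n$. This produces
$$\sigma_{\min}(M_f) \geq \left(\frac{n-1}{n}\right)^{(n-1)/2} |\det(M_f)| \cdot \max\left\{\frac{r_{\min}(M_f)}{\prod_{i} r_i(M_f)}, \frac{c_{\min}(M_f)}{\prod_{i} c_i(M_f)}\right\}.$$

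Next, I would divide $|\det M_f|^{1/n}$ by both sides of this inequality (reversing the direction) and simplify. Inverting the constant gives $\left(\frac{n}{n-1}\right)^{(n-1)/2}$, and combining the exponents of $|\det M_f|$ produces $|\det M_f|^{1/n - 1} = |\det M_f|^{(1-n)/n}$. Together with the factor coming from the $\max$ term, this yields the expression asserted in the theorem.

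I do not expect any substantial obstacle here: the result is essentially a one-line consequence of the Hong--Pan bound, reformulated in terms of the quantity $\SD(f)$. The only care required is bookkeeping the exponents of $|\det M_f|$ and tracking the placement of the $\max$ factor when the inequality is inverted, together with confirming that the $n$ appearing in the Hong--Pan statement corresponds to the degree of $f$ and hence the dimension of $M_f$.
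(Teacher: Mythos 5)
Your approach is exactly the paper's: substitute the Hong--Pan lower bound for $\sigma_{\min}(M_f)$ into $\SD(f) = |\det M_f|^{1/n} / \sigma_{\min}(M_f)$ and rearrange. So in that sense you and the paper are aligned.

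However, the one place you say you will be careful — ``tracking the placement of the $\max$ factor when the inequality is inverted'' — is exactly where a direct substitution does \emph{not} reproduce the theorem as stated. Dividing by the Hong--Pan bound sends the factor $\max\left\{\frac{r_{\min}}{\prod_i r_i}, \frac{c_{\min}}{\prod_i c_i}\right\}$ to its reciprocal, which is
\[
\min\left\{\frac{\prod_i r_i(M_f)}{r_{\min}(M_f)},\ \frac{\prod_i c_i(M_f)}{c_{\min}(M_f)}\right\},
\]
not the expression $\max\left\{\frac{r_{\min}}{\prod r_i}, \frac{c_{\min}}{\prod c_i}\right\}$ that appears in the displayed theorem. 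As literally written the theorem statement seems to carry a typo (it appears to have inherited the $\max\{\cdot\}$ form verbatim from the Hong--Pan inequality without inverting it); the paper's own proof in fact ends with the factor $\frac{\prod_i r_i(M_f)}{r_{\min}(M_f)}$, matching the reciprocal form rather than the theorem's display. Your writeup asserts that the substitution ``yields the expression asserted in the theorem,'' which it does not without this correction — so you should either state the bound with the $\min$ of reciprocals, or explicitly note that the theorem's displayed $\max$ needs to be inverted.
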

\begin{proof}
$$\SD(f) = \frac{\norm{M_f^{-1}}_2}{|\det M^{-1}|^{\frac{1}{n}}} = \frac{\frac{1}{σ_{\min}(M_f)}}{\frac{1}{|\det M_f|^{\frac{1}{n}}}} = \frac{|\det M_f|^{\frac{1}{n}}}{σ_{\min}(M_f)}$$
$$\SD(f) ≤ \frac{|\det M_f|^{\frac{1}{n}}}{\parens{\frac{n-1}{n}}^{(n-1)/2} |\det(M_f)| \frac{r_{\min}(M_f)}{∏_{i=0}^n(M_f)}} = (\frac{n}{n-1})^{(n-1)/2} |\det(M_f)|^{\frac{1-n}{n}}
\frac{∏_{i=0}^n(M_f)}{r_{\min}(M_f)}$$
$$\implies \SD(f) ≤ \parens{\frac{n}{n-1}}^{(n-1)/2} |\det(M_f)|^{\frac{1-n}{n}}
\frac{∏_{i=0}^nr_i(M_f)}{r_{\min}(M_f)}$$
\end{proof}

\noindent
Similarly, in \cite{note}, Yu and Gu presented another lower bound on the minimum singular value based on the Frobenius norm. 
With the Frobenius norm defined as $$\norm{A}_F^2 = ∑_{i=1}^n ∑_{j=1}^n |a_{ij}|^2$$
The minimum singular value of matrix $A$ is bounded as follows:
$$σ_{\min}(A)\geq |\det A| \left(\frac{n-1}{\norm{A}_F^2}\right)^{\frac{n-1}{2}}$$
We use this now to propose another bound on spectral distortion.

\begin{theorem}\label{9}
For a polynomial $f$ of degree $n$,
$$\SD(f)_\leq \parens{\frac{\norm{M_F^2}}{n-1}}^{\frac{n-1}{2}}|\det{M_f}|^{\frac{1-n}{n}}$$
\end{theorem}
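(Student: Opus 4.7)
The plan is to mirror the proof of Theorem \ref{7} almost verbatim, substituting the Yu--Gu lower bound for the Hong--Pan lower bound. Because all the hard work is packaged into the cited inequality
\[\sigma_{\min}(A) \geq |\det A| \left(\frac{n-1}{\|A\|_F^2}\right)^{(n-1)/2},\]
the proof reduces to a short algebraic manipulation, with no additional structural input required from $M_f$.

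First, I would recall the definition
\[\SD(f) = \frac{|\det M_f|^{1/n}}{\sigma_{\min}(M_f)}\]
already derived in the preliminaries. Then I would apply the Yu--Gu bound to the Vandermonde matrix $M_f$ (with $n=\deg f$), which immediately gives
\[\frac{1}{\sigma_{\min}(M_f)} \leq \frac{1}{|\det M_f|} \left(\frac{\|M_f\|_F^2}{n-1}\right)^{(n-1)/2}.\]

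Multiplying this inequality by $|\det M_f|^{1/n}$ and combining exponents on the determinant factor yields
\[\SD(f) \leq |\det M_f|^{1/n - 1} \left(\frac{\|M_f\|_F^2}{n-1}\right)^{(n-1)/2} = |\det M_f|^{(1-n)/n} \left(\frac{\|M_f\|_F^2}{n-1}\right)^{(n-1)/2},\]
which is precisely the claimed bound.

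There is no real obstacle in this proof; the only point worth checking is that the hypotheses of the Yu--Gu bound apply unconditionally to $M_f$, which they do since $M_f$ is a square matrix of full rank (being the Vandermonde matrix of the distinct roots of the monic irreducible polynomial $f$, hence invertible with $|\det M_f| \neq 0$ so that the bound is nontrivial). The only minor bookkeeping issue is the apparent typo $\|M_F^2\|$ in the statement, which should read $\|M_f\|_F^2$ to match the Frobenius norm appearing in Yu--Gu.
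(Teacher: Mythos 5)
Your proof is correct and follows essentially the same route as the paper: both simply plug the Yu--Gu lower bound on $\sigma_{\min}(M_f)$ into the definition $\SD(f) = |\det M_f|^{1/n}/\sigma_{\min}(M_f)$ and rearrange. You are also right that the statement contains a typo and should read $\|M_f\|_F^2$ rather than $\|M_F^2\|$.
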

\begin{proof}
$$\SD(f) = \frac{\norm{M_f^{-1}}_2}{|\det M^{-1}|^{\frac{1}{n}}} = \frac{\frac{1}{σ_{\min}(M_f)}}{\frac{1}{|\det M_f|^{\frac{1}{n}}}} = \frac{|\det M_f|^{\frac{1}{n}}}{σ_{\min}(M_f)}$$
$$\SD(f)\leq \frac{|\det{M_f}|^{\frac{1}{n}}}{\parens{\frac{n-1}{\norm{M_f}_F^2}}^{\frac{n-1}{2}}|\det{M_f}|}$$
This implies
\[\SD(f)_\leq \parens{\frac{\norm{M_F^2}}{n-1}}^{\frac{n-1}{2}}|\det{M_f}|^{\frac{1-n}{n}}\]
\end{proof}


\section{Conclusion}

In this paper, we showed that the $M_f^{\dagger}M_f$ matrix from which the spectral distortion is derived has a convenient formula with special properties for the case of a cyclotomic polynomial $f$.
Moreover, we derived mild generalizations of these properties for non-cyclotomic polynomials. Finally, we found bounds on the eigenvalues of this matrix for the general case, as well as bounds on the spectral distortion in the cyclotomic case.


\begin{thebibliography}{1}


\bibitem {A2} M. R. Albrecht and A. Deo, \emph{Large modulus ring-LWE $\geq$ module-LWE},  {\bf  ASIACRYPT 2017} ,  Vol. 10624, (2017) 267-- 296 .

\bibitem {B} 
A. Banerjee,, C. Peikert and A. Rosen, \emph{Pseudorandom functions and lattices}, {\bf EUROCRYPT 2012}, Lecture Notes in Computer Science, Vol. 7237 (2012) 719 -- 737.

\bibitem{BV}
Z. Brakerski and V. Vaikuntanathan, \emph {Fully homomorphic encryption from Ring-LWE and security for key dependent messages}, {\bf Lecture Notes in Computer Science} Vol. 6841, (2011), 505-524.

\bibitem{provably weak revisited}
W. Castryck, I. Iliashenko, and F. Vercauteren, \emph{Provably Weak Instances of Ring-LWE Revisited}, Advances in Cryptology - CRYPTO 2016, \textbf{Lecture Notes in Computer Science}, Vol. 9665, Springer (2016), 147-167.

\bibitem{attacks on search}
H. Chen, K. Lauter, and K.E. Stange, \emph{Attacks on the Search-RLWE problem with small errors}, SIAM Journal on Applied Algebra and Geometry, Vol. 1 (2017).

\bibitem {CLS} H. Chen, K. Lauter, and K. E. Stange, \emph{Attacks on the Search RLWE Problem with Small Errors}, \textbf{SIAM J. Appl. Algebra Geometry}, Vol. 1(1), 665--682. 

\bibitem {DPA} 
I. Damg\"{a}rd, A. Polychroniadou, and R. Adaptively, \emph {Secure Multi-Party Computation from LWE}, {\bf PKC 2016: Public-Key Cryptography},  Lecture Notes in Computer Science, Vol. 9615,  208--233. 


\bibitem{RLWE for NT}
Y. Elias, K.E. Lauter, E. Ozman, and K.E. Stange, \emph{Ring-LWE Cryptography for the Number Theorist}, Directions in Number Theory, \textbf{Association for Women in Mathematics Series}, Vol. 3, Springer (2016), 271--290.

\bibitem{provably weak}
Y. Elias, K.E. Lauter, E.Ozman, and K.E. Stange, \emph{Provably Weak Instances of Ring-LWE}, Advances in Cryptology – CRYPTO 2015, \textbf{Lecture Notes in Computer Science}, Vol. 9215,  Springer, Heidelberg (2015), 63--92.

Y. Elias, K.E. Lauter, E. Ozman, and K.E. Stange, \emph{Ring-LWE Cryptography for the Number Theorist}, Directions in Number Theory, \textbf{Association for Women in Mathematics Series}, Vol. 3, Springer (2016), 271--290.


\bibitem{BSNK}
K. Basu, D.  Soni, M. Nabeel, and R.  Karri, \emph{NIST Post-Quantum Cryptography: A Hardware Evaluation Study}, {\bf IACR Cryptology ePrint Archive}, Vol. 47 (2019).



\bibitem {HPS} J. Hoffstein, J. Pipher, J. H. Silverman, \emph{NTRU: A Ring Based Public Key Cryptosystem},  {\bf Lecture Notes in Computer Science} Vol. 1423,  (1998), 267--288.


\bibitem{a lower bound}
Y. P. Hong and C.-T.Pan, \emph{A Lower Bound for the Smallest Singular Value}, Linear Algebra and its Applications, Vol. 172 (1992), 27--32.



\bibitem{LP}
R. Lindner and Chris Peikert, \emph{ Better Key Sizes (and Attacks) for LWE-Based Encryption}, {\bf Lecture Notes in Computer Science}, Vol.6558, (2011), 319--339. 

\bibitem{on ideal lattices}
V. Lyubashevsky, C. Peikert, and O. Regev, \emph{On Ideal Lattices and Learning with Errors Over Rings}, Advances in Cryptology – EUROCRYPT 2010: 29th Annual International Conference on the Theory and Applications of Cryptographic Techniques, French Riviera, May 30 – June 3, 2010. Proceedings (2010), 1-23.


\bibitem{toeplitz}
M.W. Meckes, \emph{On the Spectral Norm of a Random Toeplitz Matrix}, {\bf Electronic Communications in Probability},  Vol. 12 (2007),  315--325. 

\bibitem{MR}
D. Micciancio and O. Regev, \emph{Lattice-based cryptography}, {\bf  Advances in Cryptology - CRYPTO 2006}, (2009), 131--141.


bibitem{LP}
O. Regev, \emph{On Lattices, Learning with Errors, Random Linear Codes, and Cryptography}, {\bf Journal of the ACM (JACM)}, Vol. 56: 6, (2009),  84--93.


\bibitem{note}
Y. Yu and D. Gu, \emph{A note on a lower bound for the smallest singular value}, Linear Algebra and its Applications, Vol. 252
(1997), 25–38.

\bibitem {W} T. Wang, J. Yu, P.  Zhang and Y. Zhang, \emph {Efficient Signature Schemes from R-LWE}, {\bf Trans. Internet Inf. Syst.}, Vol. 10 (2010), 3911--3924.

\end{thebibliography}
\end{document}&&\ddots        &&&&&  \\